\newtheorem{theorem}{Theorem} [section]
\newtheorem{lemma}[theorem]{Lemma}
\newtheorem{proposition}[theorem]{Proposition}
\newtheorem{remark}[theorem]{Remark}
\newtheorem{definition}[theorem]{Definition}
\DeclareMathOperator*{\supp}{supp}
\newcommand{\I}{\hspace{0.5mm}\text{I}\hspace{0.5mm}}
\newcommand{\noi}{\noindent}
\newcommand{\Z}{\mathbb{Z}}
\newcommand{\R}{\mathbb{R}}
\newcommand{\C}{\mathbb{C}}
\newcommand{\T}{\mathbb{T}}
\let\Re=\undefined\DeclareMathOperator*{\Re}{Re}
\let\Im=\undefined\DeclareMathOperator*{\Im}{Im}
\renewcommand{\L}{\mathcal{L}}
\newcommand{\N}{\mathcal{N}}
\newcommand{\D}{\mathcal{D}}
\newcommand{\A}{\mathcal{A}}
\newcommand{\al}{\alpha}
\newcommand{\dl}{\delta}
\newcommand{\eps}{\varepsilon}
\newcommand{\G}{\Gamma}
\newcommand{\ld}{\lambda}
\newcommand{\s}{\sigma}
\newcommand{\ft}{\widehat}
\newcommand{\cj}{\overline}
\newcommand{\dx}{\partial_x}
\newcommand{\dt}{\partial_t}
\renewcommand{\l}{\ell}
\renewcommand{\o}{\omega}
\newcommand{\les}{\lesssim}
\newcommand{\jb}[1]
{\langle #1 \rangle}
\newcommand{\too}{\longrightarrow}
\newcommand{\oo}{\pmb{\o}}
\newcommand{\n}{\pmb{n}}
\numberwithin{equation}{section}
\numberwithin{theorem}{section}
\begin{document}

\baselineskip = 14pt


\title[NLS with almost periodic initial data]
{On nonlinear Schr\"odinger equations
with almost periodic initial data}

\author{Tadahiro Oh}

\address{
Tadahiro Oh\\
School of Mathematics\\
The University of Edinburgh,
and The Maxwell Institute for the Mathematical Sciences\\
James Clerk Maxwell Building\\
The King's Buildings\\
Peter Guthrie Tait Road\\
Edinburgh\\ EH9 3FD\\ United Kingdom} 

\email{hiro.oh@ed.ac.uk}

\subjclass[2010]{35Q55,  11K70, 42A75}

\keywords{nonlinear Schr\"odinger equation;
well-posedness; almost periodic functions; finite time blowup solution}

\begin{abstract}

We consider the Cauchy problem
of nonlinear Schr\"odinger equations (NLS)
with almost periodic functions as initial data.
We first prove that, given
a frequency set $\pmb{\o} =\{ \o_j\}_{j = 1}^\infty$,
NLS is  local well-posed
in the algebra  $\A_{\pmb{\o}}(\R)$ of almost periodic functions
with absolutely convergent Fourier series.
Then, we prove a finite time blowup result
for  NLS with a nonlinearity $|u|^p$, $p \in 2\mathbb{N}$.
This 
provides the first instance of finite time blowup solutions
to NLS with generic almost periodic initial data.
\end{abstract}

\maketitle

\section{Introduction}

We consider the Cauchy problem
of the following nonlinear Schr\"odinger equation (NLS)
with an algebraic power-type nonlinearity:
\begin{align}
\begin{cases}
i \dt u + \dx^2 u = \N(u), \\
u|_{t = 0} = f,
\end{cases}
\quad (t, x) \in \R\times \R,
\label{NLS1}
\end{align}

\noi
where the nonlinearity is given by $\N(u) = \N_p(u, \cj u) = u^k \cj u^{p-k}$, $0 \leq k \leq p$,
$p\in \mathbb{N}$, $k \in \mathbb{N} \cup\{0\}$.
For example, this includes the standard
power nonlinearity $|u|^{p-1} u$
and a nonlinearity $|u|^p$ without gauge invariance.

The Cauchy problem \eqref{NLS1} has been studied extensively
in terms of the usual
 Sobolev spaces $H^s(\R)$ on the real line and
 the Sobolev spaces $H^s_{\text{per}}(\R) \simeq H^s(\T)$
 of periodic functions (of a fixed period) on $\R$.
See \cite{CAZ, TAO} for the references therein.
Our main interest in this paper is to study the Cauchy problem \eqref{NLS1}
with {\it almost periodic} functions as initial data.

\begin{definition}\rm
We say that a complex-valued function $f$ on $\R$ is  {\it almost periodic},
if it is continuous and,  for every $\eps > 0$, there exists  $L = L(\eps, f)>0$
such that every interval of length $L$ on $\R$ contains
 a number $\tau$ such that
\[ \sup_{x \in \R} |f(x-\tau) - f(x)|< \eps.\]

\noi
We use $AP(\R)$ to denote the space of almost periodic functions on $\R$.
\end{definition}

\noi
The study of almost periodic functions was initiated by Bohr \cite{Bohr}.
In the following, we briefly go over the basic properties
of almost periodic functions.
See Besicovitch \cite{B}, Corduneanu \cite{C}, and Katznelson \cite{K}
for more on the subject.
Let us first state
several equivalent
characterizations
for almost periodic functions.

\begin{definition}\rm

\noi
(i) We say that a function $f$ on $\R$ has the {\it approximation property},
if it can be uniformly approximated by trigonometric polynomials.
More precisely, given any $\eps > 0$,
there exists a trigonometric polynomial $P_\eps(x)$
such that
\[ \sup_{x \in \R} |f(x) - P_\eps(x)|< \eps.\]

\noi
(ii) We say that a continuous function on $\R$ is {\it normal}
if, given any $\{x_n\}_{n = 1}^\infty \subset \R$,
the collection $\{ f(\,\cdot  + x_n)\}_{n = 1}^\infty$ is precompact in $L^\infty(\R)$.
Namely, there exists a subsequence $\{ f(\,\cdot + x_{n_j})\}_{j = 1}^\infty$
uniformly convergent on $\R$.
\smallskip

\end{definition}

\noi
An important fact is that   the set of almost periodic functions,
 the set of functions with the approximation property,
 and  the set of normal functions all coincide.
Hence, we freely use any of these three characterizations
in the following.
We also point out
 that these three  notions can be extended to
Banach-space valued functions
and that they are also equivalent in the Banach space setting.
Given a Banach space $X$,
we use $AP(\R; X)$ to denote the space of almost periodic functions on $\R$
with values in $X$.

It is known that
$AP(\R)$ is a closed subalgebra of $L^\infty(\R)$
and that
almost periodic functions are uniformly continuous.
 Given $f \in AP(\R)$,
 we can define the so-called
 {\it mean value} $M(f)$  of $f$
 by
\begin{align}
	 M(f) := \lim_{L\to \infty} \frac{1}{2L}\int_{-L}^L f(x) dx,
\label{AP1}
\end{align}

\noi
where the limit on the right-hand side of \eqref{AP1}
always exists if $f \in AP(\R)$.
Given  $f \in AP(\R)$, we  define
the $\L^2$-norm by the mean value of $|f|^2$:
\begin{align}
\|f\|_{\L^2} := \lim_{L\to \infty} \bigg(\frac{1}{2L}\int_{-L}^L |f(x)|^2 dx\bigg)^\frac{1}{2}.
\label{AP2}
\end{align}

\noi
Note that the limit on the right-hand side of \eqref{AP2} exists
since the algebra property of $AP(\R)$
states that  $|f|^2$ is almost periodic, if $f \in AP(\R)$.
We have the following lemma.

\begin{lemma}[Lemma on p.\,177 in \cite{K}]\label{LEM:AP1}
Let $f \in AP(\R)$ such that $f \geq 0 $ on $\R$.
If $f$ is not identically equally to 0,
then $M(f)>0$.
In particular, the $\L^2$-norm, defined in \eqref{AP2},  of a function $f \in AP(\R)$
is 0 if and only if $f \equiv 0$.
Hence, it  is indeed a norm on $AP(\R)$.

\end{lemma}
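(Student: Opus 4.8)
The three assertions are nested, so the plan is to concentrate essentially all the work on the first one. Indeed, once I establish that $M(f)>0$ for every nonnegative, nontrivial $f\in AP(\R)$, the characterization of the $\L^2$-norm follows by applying this to $|f|^2$, and the norm axioms are then routine. The strategy for the first assertion is to exploit the \emph{relative density} of the almost periods: a single place where $f$ is large is reproduced, with positive frequency, throughout $\R$, which forces the averages in \eqref{AP1} to stay bounded away from $0$.

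Concretely, since $f\not\equiv 0$ and $f\ge 0$, I would pick $x_0$ with $f(x_0)>0$, set $c=\tfrac12 f(x_0)$, and use continuity of $f$ to find $\dl>0$ with $f>c$ on the interval $I_0=(x_0-\dl,x_0+\dl)$. Applying the definition of almost periodicity with $\eps=\tfrac{c}{2}$ produces a length $L>0$ such that every interval of length $L$ contains a number $\tau$ with $\sup_{x}|f(x+\tau)-f(x)|<\tfrac{c}{2}$. For any such $\tau$ and any $x\in I_0$ one then has $f(x+\tau)>f(x)-\tfrac c2>\tfrac c2$, that is, $f>\tfrac c2$ on the translated interval $I_0+\tau$ of length $2\dl$.

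The heart of the argument is a packing/counting step. Set $M=L+2\dl$ and consider the separated windows $J_k=[kM,kM+L]$, $k=0,1,2,\dots$, each of length $L$. Each $J_k$ contains an almost period $\tau_k$ as above, and since $\tau_{k+1}-\tau_k\ge M-L=2\dl$, the translated intervals $I_0+\tau_k$ are pairwise disjoint and each carries $f>\tfrac c2$. Counting that roughly $T/M$ of the windows $J_k$ fit inside $[0,T]$ and using $f\ge 0$ to discard the remaining contribution, I would bound
\[
\frac{1}{2T}\int_{-T}^{T} f(x)\,dx \;\ge\; \frac{1}{2T}\sum_{k}\int_{I_0+\tau_k} f(x)\,dx \;\ge\; \frac{1}{2T}\Big(\frac{T}{M}-O(1)\Big)\cdot 2\dl\cdot\frac{c}{2}.
\]
Letting $T\to\infty$ (the limit exists since $f\in AP(\R)$) yields $M(f)\ge \dfrac{\dl c}{2(L+2\dl)}>0$, as claimed. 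The one point requiring care is precisely this step: one must keep each almost period $\tau_k$ confined to its known window $J_k$ so that the translate $I_0+\tau_k$ is localized, and the spacing $\tau_{k+1}-\tau_k\ge 2\dl$ is exactly what guarantees disjointness. This is the place where the relative density of almost periods, rather than their mere existence, is essential.

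Finally, for the $\L^2$-norm: by the algebra property $|f|^2\in AP(\R)$ and $|f|^2\ge 0$, so the first assertion gives $\|f\|_{\L^2}^2=M(|f|^2)=0$ exactly when $|f|^2\equiv 0$, i.e.\ $f\equiv 0$, which is definiteness. Homogeneity is immediate from \eqref{AP2}, and the triangle inequality follows by applying Minkowski's inequality to each truncated average $\big(\tfrac{1}{2L}\int_{-L}^{L}|f+g|^2\big)^{1/2}$ and passing to the limit $L\to\infty$. Hence $\|\cdot\|_{\L^2}$ is indeed a norm on $AP(\R)$.
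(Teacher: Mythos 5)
Your proof is correct. Note that the paper itself supplies no argument for this lemma --- it is imported by citation to Katznelson \cite{K} --- so there is no internal proof to compare against; your relative-density argument (a bump of height $c$ on an interval of length $2\dl$ recurs in every window of length $L$, giving at least $\sim T/(L+2\dl)$ disjoint translates in $[0,T]$ on which $f>c/2$, whence $M(f)\ge \dl c/(2(L+2\dl))>0$) is precisely the standard proof of this fact, essentially the one given in the cited reference, and the packing step with the spacing $\tau_{k+1}-\tau_k\ge 2\dl$ is handled correctly. The reduction of the $\L^2$ statement to the first assertion via the algebra property, and the triangle inequality via Minkowski on the truncated averages in \eqref{AP2} followed by passage to the limit, are also sound.
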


\noi
Note that the claim of Lemma \ref{LEM:AP1}
does not hold in general, if $f \notin AP(\R)$.
For example, we have $M(f) = 0$ for any bounded function $f$ with a compact support.

Next, we  define an inner product $\jb{\cdot, \cdot}_{\L^2}$ on $AP(\R)$ by
\begin{align}
\jb{f, g}_{\L^2} := M(f\cj{g}) = \lim_{L\to \infty} \frac{1}{2L}\int_{-L}^L f(x) \cj{g}(x) dx.
\label{AP3}
\end{align}

\noi	
for $f, g \in AP(\R)$.
This inner product  is well defined for $f, g \in AP(\R)$,
since $f\cj{g}$ is also in $AP(\R)$.
Moreover, it induces the $\L^2$-norm defined in \eqref{AP2}.
Therefore,
under the inner product $\jb{\cdot, \cdot}_{\L^2}$,
the space $AP(\R)$ of almost periodic functions
becomes a pre-Hilbert space (missing completeness).\footnote{In this paper, we only consider almost periodic function
in Bohr's sense.  There are, however, notions of different classes of generalized almost periodic functions due to Stepanov,
Weyl, and Besicovitch. The corresponding spaces are denoted by $S^p, W^p$ and $B^p$, respectively.
Then, we have $AP(\R) \subset S^p \subset W^p \subset B^p$, $p \geq 1$.
Moreover,  it is known that $B^2$ is complete with respect to the
$\mathcal{L}^2$-norm defined in \eqref{AP2}.}

In this pre-Hilbert space, the complex exponentials
$\{ e^{ i \o x}\}_{\o \in \R}$ form an orthonormal family.
We now define the Fourier coefficient of $f \in AP(\R)$ by
\begin{align}
\ft f (\o) = \jb {f, e^{ i \o x}}_{\L^2} = M(f e^{- i \o x}).
\label{AP2a}
\end{align}

\noi
By Bessel's inequality, we have
\[ \sum_{\o \in \R} |\ft f (\o)|^2 \leq \|f\|_{\L^2}^2 < \infty.\]

\noi
In particular, this implies that $\ft f(\o) = 0$ except
for  countable many values of $\o$'s.
Given $f \in AP(\R)$, we define its frequency set $\s(f)$
by
$\s(f) : = \{ \o \in \R:\, \ft f(\o) \ne 0\}$
and write
\begin{equation}
 f (x) \sim \sum_{\o \in \s(f)} \ft f (\o) e^{i \o x},
\label{AP3c}
 \end{equation}

\noi
where the right-hand side is the Fourier
series associated to $f \in AP(\R)$.
It is known that the orthonormal family
$\{ e^{ i \o x}\}_{\o \in \R}$ is complete
in the sense that two distinct almost periodic functions
have distinct Fourier series.
Moreover, we have the Parseval's identity:
\begin{align}
  \|f\|_{\L^2} = \bigg(\sum_{\o \in \R} |\ft f (\o)|^2 \bigg)^\frac{1}{2}
\label{AP3b}
  \end{align}

\noi
for $f \in AP(\R)$.
Regarding the actual convergence of the Fourier series
to an almost periodic function, we have the following lemma.

\begin{lemma}[Theorem 1.20 in \cite{C}] \label{LEM:AP2}
Let $f \in AP(\R)$.
If the Fourier series associated to $f$ converges uniformly,
then it converges to $f$.  Namely, we have
\begin{align}
 f (x) = \sum_{\o \in \s(f)} \ft f (\o) e^{i \o x}.
\label{AP3a}
 \end{align}
\end{lemma}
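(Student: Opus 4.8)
The plan is to let $g$ denote the uniform limit of the Fourier series of $f$ and to show that $g = f$ by verifying that $g$ shares every Fourier coefficient with $f$ and then invoking the uniqueness of Fourier series on $AP(\R)$ already recorded above. The whole argument thus reduces to a single coefficient computation together with a completeness statement.

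First I would observe that, writing the partial sums of the series as trigonometric polynomials $P_N(x) = \sum_{\o \in F_N} \ft f(\o) e^{i\o x}$ for an exhausting sequence of finite sets $F_N \uparrow \s(f)$, the uniform limit $g = \lim_{N\to\infty} P_N$ has the approximation property and therefore belongs to $AP(\R)$ (equivalently, $g \in AP(\R)$ because the latter is a closed subalgebra of $L^\infty(\R)$). In particular, for every $\mu \in \R$ the product $g\, e^{-i\mu x}$ is again almost periodic, so the mean value $M(g\, e^{-i\mu x})$ and the Fourier coefficient $\ft g(\mu) = M(g\, e^{-i\mu x})$ are well defined.

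The key step is to pass the mean value through the uniform limit. Here I would use that $M$ is a bounded linear functional on $AP(\R)$ with respect to the sup norm, namely the elementary bound
\begin{align*}
|M(h)| = \Big| \lim_{L\to\infty} \frac{1}{2L}\int_{-L}^L h(x)\, dx \Big| \leq \|h\|_{L^\infty(\R)}.
\end{align*}
Since $P_N\, e^{-i\mu x} \to g\, e^{-i\mu x}$ uniformly, this bound gives $M(P_N\, e^{-i\mu x}) \to M(g\, e^{-i\mu x}) = \ft g(\mu)$. On the other hand, by linearity of $M$ together with the orthonormality relation $M(e^{i(\o - \mu)x}) = \jb{e^{i\o x}, e^{i\mu x}}_{\L^2}$, which equals $1$ for $\o = \mu$ and $0$ otherwise, each finite sum collapses to $M(P_N\, e^{-i\mu x}) = \ft f(\mu)$ as soon as $\mu \in F_N$ (and is $0$ for all $N$ when $\mu \notin \s(f)$, in which case $\ft f(\mu) = 0$ anyway). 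Letting $N \to \infty$ then yields $\ft g(\mu) = \ft f(\mu)$ for every $\mu \in \R$.

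Finally, since $f$ and $g$ both lie in $AP(\R)$ and have identical Fourier coefficients, they have identical Fourier series; by the completeness of the family $\{e^{i\o x}\}_{\o \in \R}$ stated above, two distinct almost periodic functions have distinct Fourier series, so $f = g$, which is exactly \eqref{AP3a}. The one point requiring care, and the main (if modest) obstacle, is the interchange of $M$ with the infinite summation: this is precisely where the hypothesis of uniform convergence is used, and it enters only through the sup-norm continuity of the mean value functional.
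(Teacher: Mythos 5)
Your proof is correct. Note that the paper itself offers no proof of this lemma --- it is quoted verbatim from Corduneanu \cite{C} (Theorem 1.20) --- so there is nothing internal to compare against; your argument is the standard one behind that reference: the uniform limit $g$ of the partial sums is almost periodic, the sup-norm bound $|M(h)| \leq \|h\|_{L^\infty(\R)}$ lets you compute $\ft g(\mu)$ term by term so that $\ft g(\mu) = \ft f(\mu)$ for every $\mu \in \R$, and the uniqueness (completeness) theorem stated just before the lemma then forces $g = f$. All the ingredients you invoke (the approximation property, linearity and boundedness of $M$, orthonormality of $\{e^{i\o x}\}_{\o \in \R}$, and uniqueness of Fourier series) are established independently of the lemma, so there is no circularity, and the one delicate step --- interchanging $M$ with the infinite sum --- is exactly where you correctly spend the hypothesis of uniform convergence.
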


Given $\oo = \{\o_j\}_{j = 1}^\infty
\in \R^{\mathbb N}$,  we
say that $\oo$ is linear independent
if any relation of the form:
\[ \sum_{j = 1}^N r_j \o_j =0, \quad r_j  \in \mathbb Q, \]

\noi
implies that $r_j = 0$, $j = 1, \dots, N$.
Associated to this notion of linear independence,
there is an important criterion
on the convergence of the Fourier series
to a given almost periodic function.

\begin{lemma}[Theorem 1.25 in \cite{C}] \label{LEM:AP2a}
Let  $\oo = \{\o_j\}_{j = 1}^\infty
\in \R^{\mathbb N}$ be linearly independent.
Suppose that  $f \in AP(\R)$ satisfies $\s(f) \subset \oo$.
Then,  the Fourier series associated to $f$ converges uniformly.
In particular, \eqref{AP3a} holds.
\end{lemma}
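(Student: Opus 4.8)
The plan is to reduce the uniform convergence of the Fourier series to the \emph{absolute} summability of its coefficients, and then to extract that summability from the linear independence hypothesis via Kronecker's theorem combined with a positive (Bochner--Fej\'er) summation procedure. Writing $c_j = \ft f(\o_j)$, I first observe that it suffices to prove
\begin{align}
\sum_{j=1}^\infty |c_j| < \infty,
\label{plan1}
\end{align}
since then the Weierstrass $M$-test shows that $\sum_j c_j e^{i\o_j x}$ converges uniformly on $\R$, and Lemma~\ref{LEM:AP2} identifies the (necessarily continuous) uniform limit with $f$ itself, yielding both the stated convergence and \eqref{AP3a}.

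The key algebraic input is that, for a \emph{finite} trigonometric polynomial $P(x) = \sum_{j=1}^N a_j e^{i\o_j x}$ whose frequencies $\o_1, \dots, \o_N$ are linearly independent over $\mathbb{Q}$, one has the identity
\begin{align}
\sup_{x \in \R} |P(x)| = \sum_{j=1}^N |a_j|.
\label{plan2}
\end{align}
The inequality $\leq$ is the triangle inequality. For $\geq$, write $a_j = |a_j| e^{i \phi_j}$ and invoke Kronecker's theorem: since $\o_1, \dots, \o_N$ are $\mathbb{Q}$-linearly independent, the one-parameter flow $x \mapsto (\o_1 x, \dots, \o_N x) \bmod 2\pi$ is dense in the torus $\mathbb{T}^N$. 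Hence one may choose $x$ so that $\o_j x \equiv -\phi_j \pmod{2\pi}$ for every $j$ up to arbitrarily small error, which aligns all the summands and forces $|P(x)|$ arbitrarily close to $\sum_j |a_j|$.

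To convert \eqref{plan2} into the uniform bound needed for \eqref{plan1}, I would use the Bochner--Fej\'er summation adapted to $AP(\R)$, the almost periodic analogue of Fej\'er's theorem. The Bochner--Fej\'er polynomial $\tau$ of $f$ is a convolution of $f$ against a \emph{positive} kernel of mean value $1$, so that $\sup_{x\in\R} |\tau(x)| \leq \|f\|_{L^\infty}$; moreover $\tau$ has the explicit form $\tau(x) = \sum_j d_j c_j e^{i\o_j x}$ with finitely many nonzero multipliers $0 \leq d_j \leq 1$, so that $\s(\tau) \subset \s(f) \subset \oo$ stays linearly independent, and the construction can be refined so that $d_j \to 1$ for each fixed $j$. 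Applying \eqref{plan2} to $\tau$ gives
\begin{align}
\sum_j d_j |c_j| = \sup_{x \in \R} |\tau(x)| \leq \|f\|_{L^\infty},
\label{plan3}
\end{align}
and letting the kernel refine so that $d_j \uparrow 1$, monotone convergence on each finite partial sum yields $\sum_j |c_j| \leq \|f\|_{L^\infty} < \infty$, which is \eqref{plan1}.

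The main obstacle is the middle step: setting up the Bochner--Fej\'er kernel correctly so that it is nonnegative, has mean value $1$, keeps the spectrum inside $\oo$, and has multipliers tending to $1$. Once that machinery is in place, the Kronecker identity \eqref{plan2} does the rest essentially for free, since it is precisely the nonnegativity of the kernel (and hence of the multipliers) that lets \eqref{plan3} survive the passage to the limit; a direct attempt using the partial sums of the Fourier series would fail here, as those need not be uniformly bounded in $L^\infty$.
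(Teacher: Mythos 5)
The paper contains no proof of this lemma: it is imported wholesale as Theorem 1.25 of Corduneanu \cite{C}, so the relevant comparison is with the classical proof behind that citation, and your proposal is a correct reconstruction of essentially that argument. Your three ingredients are all sound: the reduction to absolute summability (absolute convergence makes the unordered series converge uniformly and unconditionally, and Lemma \ref{LEM:AP2} then identifies the limit with $f$, giving \eqref{AP3a}); the Kronecker identity $\sup_x|P(x)|=\sum_j|a_j|$ for finite polynomials with $\mathbb{Q}$-independent frequencies (the flow $x\mapsto(\o_1x,\dots,\o_Nx)$ is dense in $\T^N$ exactly under that independence, so no extra hypothesis is hidden); and the transfer to $f$ via a positive summation kernel. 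Moreover, the step you flag as the ``main obstacle'' --- building the Bochner--Fej\'er kernel --- is in fact where the hypothesis $\s(f)\subset\oo$ pays off, and it can be done in a few lines: take $K_{m,N}(y)=\prod_{j=1}^{m}F_N(\o_j y)$, a finite product of Fej\'er kernels $F_N(\theta)=\sum_{|n|<N}\bigl(1-\tfrac{|n|}{N}\bigr)e^{in\theta}\ge 0$. Every frequency of $K_{m,N}$ has the form $\sum_{j\le m}n_j\o_j$ with $|n_j|<N$, so linear independence gives $M(K_{m,N})=\ft{K}_{m,N}(0)=1$ and $\ft{K}_{m,N}(\o_j)=1-\tfrac1N$ for $j\le m$, $=0$ for $j>m$. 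Hence $\tau_{m,N}(x):=M_y\bigl(f(x-y)K_{m,N}(y)\bigr)=\sum_{j\le m}\bigl(1-\tfrac1N\bigr)\ft f(\o_j)e^{i\o_j x}$ is a finite polynomial with frequencies in $\oo$, and positivity of the kernel together with $M(K_{m,N})=1$ give $\|\tau_{m,N}\|_{L^\infty}\le\|f\|_{L^\infty}$; your Kronecker identity applied to $\tau_{m,N}$ yields $\bigl(1-\tfrac1N\bigr)\sum_{j\le m}|\ft f(\o_j)|\le\|f\|_{L^\infty}$, and letting $N\to\infty$ and then $m\to\infty$ gives the absolute summability. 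Your closing remark is also the right diagnosis: the partial sums themselves need not be uniformly bounded, and it is precisely the positivity of the Fej\'er-type kernel that makes the multiplier bound survive the limit. In short: correct, complete once the kernel is written down as above, and essentially the proof of the theorem the paper cites.
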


Given a set $S$ of real numbers,
we say that a linearly independent
set $\oo = \{\o_j\}_{j = 1}^\infty$
is a basis for the set $S$, if every element in $ S$
can be represented as
a finite linear combination of elements in $\oo$
with rational coefficients.
Given $f \in AP(\R)$,
we say that a linearly independent set  $\pmb{\o} = \{\o_j\}_{j = 1}^N$,
allowing the case $N = \infty$,
is a basis of $f$, if
it is a  basis of
the frequency set
$\s(f)$ of $f$.
Lemma 1.14 in \cite{C} guarantees existence of a basis of $f \in AP(\R)$.
We say that a basis $\pmb{\o} = \{\o_j\}_{j = 1}^N$ of $f$ is an integral basis
if any element in the frequency set $\s(f)$ can be written as a finite
linear combination of elements in $\oo$
with integer coefficients.\footnote{Obviously, an almost periodic function is periodic
if and only if it has an integral basis consisting of a single element $\o \in \R$.}
If there exists a finite integral basis of $f$, i.e.~$N < \infty$, then we say that the function $f$
is {\it quasi-periodic}.
In this paper, we consider generic
almost periodic functions, i.e.~$N= \infty$,
but the corresponding results also hold for
quasi-periodic functions, i.e.~$N< \infty$.

Fix  $\pmb{\o} = \{\o_j\}_{j = 1}^\infty \in \R^{\mathbb N}$.
We consider functions $f \in AP(\R)$
with $\s (f) \subset \oo \cdot \Z^\mathbb{N}$
of the form:
\begin{align}
	 f(x) \sim \sum_{\n \in \Z^\mathbb{N}} \ft f(\pmb{\o}\cdot \pmb{n}) e^{i (\oo \cdot \n)x},
\label{AP4}
\end{align}

\noi
where $\n = \{ n_j\}_{j = 1}^\infty \in \Z^\mathbb{N}$.
We  define the algebra $\A_{\oo}(\R)$ by
\begin{align*}
\A_{\oo}(\R) = \big\{ f \in AP(\R):\,
f \text{ is of the form } \eqref{AP4} \text{ and }
 \| f\|_{\A_{\oo}(\R)} < \infty\big\},
 \end{align*}

\noi
where the $\A_{\oo}(\R)$-norm is given by
\begin{align*}
\| f\|_{\A_{\oo}(\R)} = \| \ft f(\oo\cdot \n)\|_{\l^1_{\n}(\Z^\mathbb{N})}.
\end{align*}

\noi
See Lemma \ref{LEM:AP3} below for some properties of $\A_{\oo}(\R)$.

\begin{remark}\label{REM:FS} \rm
Note that, if $\oo$ is linearly independent, then it is an integral basis of $f$.
If $\oo$ is not linearly independent,
then, we may have $\oo\cdot \n_1 = \oo \cdot \n_2$ for some $\n_1 \ne \n_2$.
Namely, $\ft f (\oo\cdot \n)$ in \eqref{AP4} may not represent a Fourier coefficient
of $f$ defined in \eqref{AP2a} and \eqref{AP3c}.
In this case, 
the Fourier coefficient $\ft f(\al)$, $\al \in \R$, is given by
$\ft f(\al) = \sum_{\oo\cdot\n = \al}\ft f (\oo\cdot \n)$.
In the following (for example, see Lemma \ref{AP3} below),
we proceed, assuming that $\oo$ is linearly independent.
We point out that the results also hold even when $\oo$ is not linearly independent.
It suffices to note that
 the definition of $\A_{\oo}(\R)$ guarantees that
the Fourier coefficients $\ft f (\al)$ of $f \in \A_{\oo}(\R)$
is absolutely summable.

\end{remark}

\smallskip

We are now ready to state our first result.

\begin{theorem}\label{THM:LWP}
Let $p \in \mathbb{N}$.
Fix $\oo = \{\o_j\}_{j = 1}^\infty \in \R^{\mathbb N}$.
Then,
NLS \eqref{NLS1} is locally well-posed in $\A_{\oo}(\R)$.
More precisely, given
$f \in \A_{\oo}(\R)$, there exist $T = T(\|f\|_{\A_{\oo}(\R)})>0$
and unique $u \in C([-T, T]; \A_{\oo}(\R))$
satisfying the following Duhamel formulation of \eqref{NLS1}:
\begin{align}
u(t) = S(t) f -i \int_0^t S(t-t') \N(u)(t') dt',
\label{NLS3}
\end{align}

\noi
where  $S(t) = e^{i t\dx^2}$.
Moreover, the solution map $: f \in \A_{\oo}(\R) \mapsto u(t) \in \A_{\oo}(\R)$
is locally Lipschitz continuous.

\end{theorem}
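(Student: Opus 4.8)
The plan is to run a standard contraction-mapping (Picard) argument on the Duhamel map in $C([-T,T];\A_{\oo}(\R))$, for which the only genuine inputs are the Banach-algebra structure of $\A_{\oo}(\R)$ and the good behaviour of the linear propagator $S(t)$. So the first task is to record the two structural facts on which everything rests (these are collected in Lemma \ref{LEM:AP3}). Assuming $\oo$ linearly independent as in Remark \ref{REM:FS}, the identification $f \mapsto \{\ft f(\oo\cdot\n)\}_{\n}$ realizes $\A_{\oo}(\R)$ isometrically as $\l^1(\Z^{\mathbb{N}})$, hence $\A_{\oo}(\R)$ is complete; and since the Fourier coefficients of a product are the convolution of the coefficient sequences, Young's inequality $\|a\ast b\|_{\l^1}\le \|a\|_{\l^1}\|b\|_{\l^1}$ yields $\|fg\|_{\A_{\oo}(\R)}\le \|f\|_{\A_{\oo}(\R)}\|g\|_{\A_{\oo}(\R)}$. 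Complex conjugation preserves the $\A_{\oo}(\R)$-norm, since it merely sends $\ft f(\oo\cdot\n)$ to $\cj{\ft f(\oo\cdot(-\n))}$. Finally, $S(t)=e^{it\dx^2}$ acts on the basic exponential $e^{i\o x}$ by the unimodular phase $e^{-it\o^2}$, so it preserves the frequency structure \eqref{AP4} and acts as an isometry on $\A_{\oo}(\R)$; moreover $t\mapsto S(t)f$ is continuous into $\A_{\oo}(\R)$ by dominated convergence applied to $\sum_{\n}|e^{-it(\oo\cdot\n)^2}-e^{-it_0(\oo\cdot\n)^2}|\,|\ft f(\oo\cdot\n)|$, the summand being dominated by the summable $2|\ft f(\oo\cdot\n)|$.

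Granting these, the multilinear estimates are immediate. Since $\N(u)=u^k\cj u^{p-k}$ is a product of $p$ factors drawn from $\{u,\cj u\}$, the algebra property gives $\|\N(u)\|_{\A_{\oo}(\R)}\le \|u\|_{\A_{\oo}(\R)}^p$, and telescoping a difference of products one factor at a time gives $\|\N(u)-\N(v)\|_{\A_{\oo}(\R)}\le p\,(\|u\|_{\A_{\oo}(\R)}^{p-1}+\|v\|_{\A_{\oo}(\R)}^{p-1})\,\|u-v\|_{\A_{\oo}(\R)}$, where conjugation-invariance of the norm is used to handle the $\cj u,\cj v$ factors.

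Now fix $f\in\A_{\oo}(\R)$, set $R=2\|f\|_{\A_{\oo}(\R)}$, write $\|u\|_{C_T}:=\sup_{|t|\le T}\|u(t)\|_{\A_{\oo}(\R)}$, and let $\Gamma u(t)=S(t)f-i\int_0^t S(t-t')\N(u)(t')\,dt'$. On the ball $B_R=\{u\in C([-T,T];\A_{\oo}(\R)):\|u\|_{C_T}\le R\}$, the isometry property of $S(t)$ together with the above estimates gives $\|\Gamma u(t)\|_{\A_{\oo}(\R)}\le \|f\|_{\A_{\oo}(\R)}+|t|\,R^p$ and $\|\Gamma u(t)-\Gamma v(t)\|_{\A_{\oo}(\R)}\le |t|\cdot 2pR^{p-1}\|u-v\|_{C_T}$ for $u,v\in B_R$. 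Choosing $T=T(\|f\|_{\A_{\oo}(\R)})$ small enough that $TR^p\le \|f\|_{\A_{\oo}(\R)}$ and $2pTR^{p-1}\le \tfrac12$ makes $\Gamma$ a contraction of $B_R$ into itself, with continuity in $t$ of $\Gamma u$ coming from the strong continuity of $S(t)$ and of the Duhamel integral. The Banach fixed point theorem then produces the unique fixed point $u\in B_R$ solving \eqref{NLS3}; uniqueness in all of $C([-T,T];\A_{\oo}(\R))$ follows by the usual argument of comparing two solutions on a subinterval on which both lie in a common ball and iterating.

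Finally, local Lipschitz continuity of the solution map is obtained by subtracting the Duhamel formulas for solutions $u_1,u_2$ from data $f_1,f_2$ in a fixed ball, giving $\|u_1-u_2\|_{C_T}\le \|f_1-f_2\|_{\A_{\oo}(\R)}+2pTR^{p-1}\|u_1-u_2\|_{C_T}$ and hence, after absorbing the second term, $\|u_1-u_2\|_{C_T}\le 2\|f_1-f_2\|_{\A_{\oo}(\R)}$. I expect no real difficulty in the fixed-point mechanism, which is entirely routine once the ambient space is a Banach algebra; the one point deserving care — and the only place where the almost-periodic setting differs from the periodic one — is the strong continuity of $S(t)$ on $\A_{\oo}(\R)$, since the frequencies $\oo\cdot\n$ are unbounded and dense, so one must genuinely invoke dominated convergence against the $\l^1$ summability of the Fourier coefficients rather than any uniform-in-frequency phase estimate.
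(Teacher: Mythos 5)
Your proposal is correct and follows essentially the same route as the paper: establish that $\A_{\oo}(\R)$ is a Banach algebra via the $\l^1$ identification and Young's inequality (the paper's Lemma \ref{LEM:AP3}), verify that $S(t)$ acts isometrically with strong continuity in time and that the Duhamel integral maps $C([-T,T];\A_{\oo}(\R))$ to itself (Lemmata \ref{LEM:linear1} and \ref{LEM:linear2}), and then run the contraction on the ball of radius $2\|f\|_{\A_{\oo}(\R)}$ with $T \sim \|f\|_{\A_{\oo}(\R)}^{-(p-1)}$, exactly as in the paper's fixed point argument. The only differences are cosmetic: the paper additionally develops distributional-solution and almost-periodicity-in-time properties that the theorem statement itself does not require, while you spell out a few routine points (conjugation invariance, telescoping of the nonlinearity, uniqueness beyond the ball) that the paper leaves implicit.
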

	
\noi
Our solution $u(t)$ lies in $\A_{\oo}(\R)$ for all $t \in [-T, T]$.
In particular, $u(t)$ is almost periodic in $x$ for all $t \in [-T, T]$.
Moreover, it satisfies \eqref{NLS1} in the distributional sense.
See Lemmata \ref{LEM:linear1} and \ref{LEM:linear2} below.

In Section \ref{SEC:LWP}, we
define the meaning of the linear propagator
$S(t) = e^{i t\dx^2}$ in the almost periodic setting
and discuss different properties of solutions
to the homogeneous and nonhomogeneous linear Schr\"odinger equations
in the almost periodic setting.
Then, we  present the proof of Theorem \ref{THM:LWP},
 based on a simple fixed point argument.
Since our approach makes use of the Fourier coefficients
of functions in $\A_{\oo}(\R)$,
it is essential that the Fourier series associated to a function in $\A_{\oo}(\R)$
actually converges to it.
See Lemma \ref{LEM:AP3}.

\begin{remark} \label{REM:LWP} \rm
Previously, Tsugawa \cite{Tsugawa} proved
local well-posedness of the Korteweg-de Vries equation (KdV) on $\R$:
\begin{align}
\dt u + \dx^3 u = u  \dx u
\label{KdV}
\end{align}

\noi
with quasi-periodic initial data
under some regularity condition.	
For fixed $\oo = \{\o_j\}_{j = 1}^N \in \R^N$ for some finite $N \in \mathbb N$,
consider a quasi-periodic function $f$ of the form:
\begin{align}
	 f(x) = \sum_{\n \in \Z^N} \ft f(\pmb{\o}\cdot \pmb{n}) e^{i (\oo \cdot \n)x}.
\label{AP4a}
\end{align}

\noi
Defining a Sobolev-type space\footnote{This Sobolev-type
space  $\mathcal{H}^{\pmb{s}}(\R)$ is basically
the space $G^{\pmb{s}, 0}$ defined in \cite{Tsugawa}.}
 $\mathcal{H}^{\pmb{s}}_{\oo}(\R)$
for  $\pmb{s} = \{s_j\}_{j = 1}^N \in \R^{N}$ by the norm
 \begin{align}
  \|f\|_{\mathcal{H}^{\pmb{s}}_{\oo}(\R)}
 :=  \|    \jb{\n}^{\pmb{s}}\ft f (\oo\cdot \n) \|_{\l^2_{\n}(\Z^N)},
 \quad
 \jb{\n}^{\pmb{s}} : =  \prod_{j = 1}^N ( 1+ |n_j|^2)^\frac{s_j}{2},
 \label{AP4b}
 \end{align}

\noi
it follows from
Lemma 2.2 (i) in \cite{Tsugawa}
that NLS \eqref{NLS1} is locally well-posed
in $\mathcal{H}^{\pmb{s}}_{\oo}(\R)$
as long as $\min(s_1, \dots, s_N) > \frac{1}{2}$.
In this case, we have $\A_{\oo}(\R) \supset
\mathcal{H}^{\pmb{s}}_{\oo}(\R)$ by Cauchy-Schwarz inequality,
and thus Theorem \ref{THM:LWP} extends
this local well-posedness result of \eqref{NLS1}
with quasi-periodic initial data
implied by Lemma 2.2 (i) in  \cite{Tsugawa}.
It is not clear if there is a natural way to define
a Sobolev-type space analogous to \eqref{AP4b}
in the almost periodic setting,
which guarantees that every function in the space
can be represented by its Fourier series.

\end{remark}

In view of Theorem \ref{THM:LWP},
it is natural to consider the global-in-time behavior
of solutions to \eqref{NLS1}.
This is, however, an extremely difficult question in general.
Consider the following  NLS with the standard power nonlinearity:
\begin{align}
i \dt u + \dx^2 u = \pm |u|^{p-1} u.
\label{PNLS}
\end{align}

\noi
A standard approach to construct global-in-time solutions
is to use conservation laws.
There are several (formal) conservation laws for \eqref{PNLS}, including
the mass conservation $Q(u) : = \|u\|_{\L^2}^2 = M(|u|^2)$
and the `Hamiltonian' conservation:
\begin{align*}
H(u) = \frac{1}{2} M(|\dx u|^2) \pm  \frac{1}{p+1} M(|u|^{p+1}).
\end{align*}

On the one hand,
in order to make use of the mass conservation
in constructing global-in-time solutions,
one needs to prove local well-posedness in
$AP(\R)$ endowed with the $\L^2$-norm.
This seems to be beyond our current technology
in the quasi- and almost periodic setting
due to the lack of Strichartz estimates.
Note that
while every function $f \in AP(\R)$ has a finite $\L^2$-norm,
(i) $AP(\R)$ is not complete with respect to the $\mathcal L^2$-norm
and (ii)
its Fourier series does not necessarily converges to $f$.
Hence, in proceeding with Fourier analytic approach,
it seems that one needs to work in a subclass,
where functions are actually represented by  their Fourier series.
For example, see Lemma \ref{LEM:AP2a} above.

On the other hand,
assuming that $u$ is of the form \eqref{AP4}, we formally have
\[ M(|\dx u|^2) = \sum_{\n \in \Z^{\mathbb{N}} }(\oo\cdot\n)^2 |\ft u (\oo\cdot\n)|^2.\]

\noi
In general, $(\oo\cdot\n)^2$ can be arbitrarily close to 0
and thus $ M(|\dx u|^2)$ (and hence the Hamiltonian) is not strong enough to control
relevant norms for iterating a local argument.

There are, however, several known
global existence results for  cubic NLS,  \eqref{PNLS} with $p = 3$,
and KdV in the almost periodic and quasi-periodic setting.
Note that the following results rely heavily on the inverse spectral method and
on the complete integrability of the equations.\footnote{
There is a special subclass of almost periodic functions called {\it limit periodic} functions,
consisting of  uniform limits of periodic functions.
In our recent paper \cite{O2}, we proved global well-posedness of the 
defocusing NLS with nonlinearity $|u|^{2k}u$, $k \in \mathbb{N}$, 
with limit periodic functions as initial data
under some regularity assumption.
In particular, our proof does not rely on the completely integrability, even when $k = 1$.
Moreover, when $k \geq 2$,  
it provides
the first instance of global existence for the defocusing NLS with (a subclass of) almost periodic initial data that are not quasi-periodic.}
Egorova \cite{E}
and Boutet de Monvel-Egorova \cite{BE}
constructed global-in-time solutions
to KdV and  cubic NLS with almost periodic initial data,
assuming some conditions,
including Cantor-like spectra for
the  corresponding
Schr\"odinger operator (for KdV)
and
Dirac operator (for cubic NLS).
In particular,
the class of almost periodic initial data in \cite{E, BE}
includes
almost periodic functions $f$
that can be approximated by periodic functions $f_n$
of growing periods $\al_n \to \infty$
in a local Sobolev norm: $\sup_{x\in \R} \| \cdot \|_{H^s([x, x+1])}$
with $s \geq 4$ for KdV and $s \geq 3$ for cubic NLS.\footnote{Note that when $s = 0$, this local Sobolev norm
corresponds to  Stepanov's $S^2$-norm used for Stepanov's generalized almost periodic functions.} 
Moreover,  convergence of $f_n$ to $f$ in this local Sobolev norm is assumed to be exponentially fast.
It is worthwhile to mention that the solutions constructed in \cite{E, BE}
are almost periodic in both $t$ and $x$.
There is also a recent global well-posedness result of KdV
with quasi-periodic initial data
by Damanik-Goldstein \cite{DG}.
Their result states that if  the Fourier coefficient $\ft f(\oo\cdot \n)$
of a `small' quasi-periodic initial condition $f$ of the form \eqref{AP4a}
decays exponentially fast (in $\n$)
and a Diophantine condition on $\oo$ is satisfied,
then there exists a unique global solution
whose Fourier coefficient also decays exponentially fast
(with a slightly worse constant).

Another approach for constructing global solutions
is to consider the problem for small initial data.
Indeed, in the usual setting on $\R^d$, i.e.~assuming that  functions belong to the usual
Lebesgue spaces $L^q(\R^d)$ or
Sobolev spaces $H^s(\R^d)$,
we have small data global well-posedness and scattering
for NLS \eqref{NLS1} on $\R^d$,
for example, for
 $p_s < p < 1+ \frac{4}{d-2}$,
where  $p_s$ is the Strauss exponent  given by
\begin{equation*}
p_s = \frac{d+2 +\sqrt{d^2 + 12 d + 4}}{2d}.
\end{equation*}

\noi
The proof of this result relies on the decay of linear solutions on $\R^d$.
See \cite{CAZ}.
On the contrary,
there is no such decay of linear solutions in the almost periodic setting.
Hence, there seems to be no natural adaptation of small data global existence theory
to the almost periodic setting.

Next, let us discuss finite time blowup solutions to
\eqref{NLS1} in the almost periodic setting.
Since a periodic function is in particular almost periodic,
known results on finite time blowup solutions in the periodic setting such as \cite{OT, O}
provide instances of finite time blowup results
in the almost periodic setting (where initial data are periodic).
There seems to be, however, no known result on finite time blowup solutions
in a generic (i.e.~non-periodic) almost periodic setting.

In the following,
we consider the Cauchy problem
of the following NLS:
\begin{align}
\begin{cases}
i \dt u + \dx^2 u = \ld |u|^p, \\
u|_{t = 0} = f,
\end{cases}
\quad (t, x) \in \R\times \R,
\label{ZNLS1}
\end{align}

\noi
for $p \in 2 \mathbb{N}$ and $\ld \in \C$.
Then, we have the following result on finite time blowup solutions
in a generic almost periodic setting.

\begin{theorem}\label{THM:blowup}
Let $p\in 2\mathbb{N}$.
Fix $\oo = \{\o_j\}_{j = 1}^\infty \in \R^{\mathbb N}$.
Let $u \in C(I; \A_{\oo}(\R))$ be the solution to \eqref{ZNLS1}
with $u|_{t = 0} = f\in \A_{\oo}(\R)$,
where $I = (-T_-,T_+)$ is the maximal time interval of existence,
containing $t = 0$.
 Suppose that $\ld \in \C$
and the mean value $M(f)$ of $f$, defined in \eqref{AP1},  satisfy
\begin{equation}\label{sign0}
\Re\ld \cdot \Im M(f) \ne 0 \quad \text{  or } \quad \Im \ld \cdot \Re M(f)  \ne 0.
\end{equation}

\noi
Then, we have $\min (T_-, T_+) < \infty$.
Namely, the solution $u$ blows up in a finite time,
either forward or backward in time.
More precisely, if \eqref{sign0} holds, then we have one of the following scenarios.

\smallskip

\noi
\textup{(i)}
Suppose that  we have
\begin{equation}\label{sign1}
\Re\ld \cdot \Im M(f) < 0 \quad \text{  or } \quad \Im \ld \cdot \Re M(f)  > 0.
\end{equation}

\noi
Then, the forward maximal time $T_+$ of existence of the solution $u$ is finite
and we have
$ \liminf_{t \nearrow T_+}\|u(t)\|_{\A_{\oo}(\R)} = \infty.$

\smallskip

\noi
\textup{(ii)}
Suppose that  we have
\begin{equation}\label{sign2}
\Re\ld \cdot \Im M(f) > 0 \quad \text{  or } \quad \Im \ld \cdot \Re M(f)  < 0.
\end{equation}

\noi
Then, the backward maximal time $T_-$ of existence of the solution $u$ is finite
and we have
$ \liminf_{t \searrow -T_-}\|u(t)\|_{\A_{\oo}(\R)} = \infty.$

\end{theorem}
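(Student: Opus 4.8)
The plan is to track the evolution of a single scalar quantity, the mean value $a(t) := M(u(t))$, which is exactly the zeroth Fourier coefficient $\ft{u}(t)(0)$. Applying $M$ to the Duhamel formula \eqref{NLS3} and using that the free propagator $S(t)$ fixes the zero frequency (so that $M(S(t)g) = M(g)$), together with the algebra property of $\A_{\oo}(\R)$ (which guarantees $|u(t)|^p = (u\cj u)^{p/2} \in \A_{\oo}(\R)$ for $p \in 2\mathbb{N}$, with its zeroth coefficient equal to $M(|u(t)|^p)$), I would obtain
\[
a(t) = M(f) - i\ld \int_0^t M\big(|u(t')|^p\big)\, dt'.
\]
The key structural point is that the dispersive term $\dx^2 u$ is invisible to $M$, since $M(\dx^2 u) = 0$; hence the PDE collapses to a scalar integral equation for $a(t)$. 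Writing $m(t) := M(|u(t)|^p) \ge 0$, which is nonnegative and continuous in $t$ because $u \in C(I;\A_{\oo}(\R))$ and $g \mapsto M(|g|^p)$ is continuous on $\A_{\oo}(\R)$, and separating real and imaginary parts, this reads
\[
\Re a(t) = \Re M(f) + \Im\ld \int_0^t m(t')\,dt', \qquad \Im a(t) = \Im M(f) - \Re\ld \int_0^t m(t')\,dt'.
\]

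Next I would derive a superlinear differential inequality. Since $M$ is a positive, normalized linear functional (positivity being Lemma \ref{LEM:AP1}), Jensen's inequality applied to the convex function $w \mapsto |w|^p$ gives $m(t) \ge |a(t)|^p$, and in particular $m(t) \ge |\Re a(t)|^p$ and $m(t) \ge |\Im a(t)|^p$. In case (i), condition \eqref{sign1} selects one component and a sign so that the corresponding component is monotone and bounded away from $0$; for instance, if $\Re\ld \cdot \Im M(f) < 0$ with $\Re\ld > 0$, then $\Im M(f) < 0$ and $\Im a(t) \le \Im M(f) < 0$ for $t \ge 0$, so setting $\zeta(t) := -\Im a(t) > 0$ yields
\[
\dot\zeta(t) = \Re\ld \cdot m(t) \ge \Re\ld\, \zeta(t)^p, \qquad \zeta(0) = -\Im M(f) > 0, \quad \Re\ld > 0.
\]
The remaining subcases of \eqref{sign1} are identical after choosing $\zeta = \pm\Re a$ or $\zeta = \pm\Im a$, each producing $\dot\zeta \ge c\,\zeta^p$ with $c>0$, $\zeta(0)>0$ and $p \ge 2$. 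Comparison with the exact ODE $\dot\eta = c\,\eta^p$ forces $\zeta(t) \to \infty$ by the finite time $t^* = \zeta(0)^{1-p}/\big(c(p-1)\big)$. Since $\zeta(t) \le |a(t)| = |M(u(t))| \le \|u(t)\|_{\A_{\oo}(\R)}$, the $\A_{\oo}(\R)$-norm cannot remain finite up to $t^*$, whence $T_+ \le t^* < \infty$.

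Finally, to upgrade $T_+ < \infty$ to $\liminf_{t\nearrow T_+}\|u(t)\|_{\A_{\oo}(\R)} = \infty$, I would invoke the blow-up alternative built into Theorem \ref{THM:LWP}: since the local existence time depends only on the norm of the datum, a finite $\liminf$ would allow one to restart the solution from times $t_n \nearrow T_+$ with uniformly bounded norm and extend it past $T_+$, contradicting maximality. Case (ii) reduces to case (i) by the time reversal $\wt u(t,x) := \cj{u(-t,x)}$, which solves the same equation with $\ld$ replaced by $\cj\ld$ and datum $\cj f$ and satisfies $\|\wt u(t)\|_{\A_{\oo}(\R)} = \|u(-t)\|_{\A_{\oo}(\R)}$; one checks that \eqref{sign2} for $(\ld, f)$ is precisely \eqref{sign1} for $(\cj\ld, \cj f)$, using $\Im M(\cj f) = -\Im M(f)$ and $\Im \cj\ld = -\Im\ld$. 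The main obstacle I anticipate is not any single estimate but the bookkeeping that makes the mean-value reduction rigorous: verifying that the zeroth mode genuinely obeys the closed scalar equation above (that $M$ commutes with the Duhamel integral and annihilates the dispersion), and justifying the Jensen bound $|M(u)|^p \le M(|u|^p)$ for the mean value functional. The latter is the crux, since the nonnegativity of $|u|^p$ (forcing $m \ge 0$) combined with this superlinear lower bound is exactly what drives the blow-up, and it is where the hypotheses $p \in 2\mathbb{N}$ and \eqref{sign0} enter essentially.
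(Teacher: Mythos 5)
Your proposal is correct, but it takes a genuinely different route from the paper. The paper argues by contradiction through a weak-solution framework: Proposition \ref{PROP:blowup} uses Zhang's test-function method, with a two-parameter space-time cutoff $\phi_{R,L}(t,x)=\theta_R(t)\eta_L(x)$ (the extra spatial parameter $L$ is forced by the fact that a nontrivial almost periodic function has infinite $L^p(\R)$-norm, so the mean value $M(|u|^p)$ must be recovered by averaging $\frac{1}{2L}\int_{-L}^{L}$ as $L\to\infty$), to show that any global weak solution under \eqref{sign1} vanishes a.e.; Proposition \ref{PROP1} then verifies that Duhamel solutions are weak solutions, and blow-up follows from uniqueness plus the blow-up alternative. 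Those compactly supported cutoffs are exactly the rigorous substitute for what you do in one stroke: testing the equation against the constant function $1$, i.e., applying $M$. Since the theorem concerns solutions satisfying the Duhamel formulation \eqref{ZNLS3}, you may indeed apply $M$ directly: by the paper's own representation \eqref{lin11a}, the zero-frequency part of the Duhamel term is $-i\ld\int_0^t M(|u(t')|^p)\,dt'$ (this also covers linearly dependent $\oo$, where $M(g)=\sum_{\oo\cdot\n=0}\ft g(\oo\cdot\n)$, since every such frequency carries the trivial phase), and your Jensen bound $|M(g)|^p\le M(|g|^p)$ follows by applying Jensen's inequality on each average over $[-L,L]$ and letting $L\to\infty$. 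The four sign subcases of \eqref{sign1} then give the Riccati inequality $\dot\zeta\ge c\,\zeta^p$ with $c=|\Re\ld|$ or $|\Im\ld|$, and continuity of $t\mapsto M(|u(t)|^p)$ (from $u\in C(I;\A_{\oo}(\R))$, the algebra property, and $p\in 2\mathbb{N}$) makes the ODE comparison rigorous. Your treatment of the liminf claim via the blow-up alternative coincides with the paper's, and your reduction of (ii) to (i) via $\wt u(t,x)=\cj{u(-t,x)}$ is a clean variant of the paper's $v(t,x)=u(-t,x)$. The trade-off: your argument is more elementary and quantitative, giving an explicit bound such as $T_+\le |\Im M(f)|^{1-p}/\big((p-1)\Re\ld\big)$ in the first subcase, but it requires the Duhamel representation and time-continuity of the solution; the paper's Proposition \ref{PROP:blowup} is a strictly stronger nonexistence statement, valid for merely bounded weak solutions $u\in L^\infty([0,\infty);\A_{\oo}(\R))$ with no continuity or Duhamel structure assumed, and its computation also yields the global space-time bound of Remark \ref{REM:bound1} essentially for free.
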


\noi
Previously, Ikeda-Wakasugi \cite{IW} and the author \cite{O}
obtained analogous results for \eqref{ZNLS1} on $\R^d$ (with $1 < p \leq \frac{2}{d}$
for initial data in $L^2(\R^d)$)
and  on $\T^d$ (with $p \in 2\mathbb{N}$).
Theorem \ref{THM:blowup}
can be viewed as an extension of the periodic result
in \cite{O} to the generic almost periodic setting.
In particular,
note that
  Theorem \ref{THM:blowup}
holds (i) even for small initial  data
and (ii) even above the Strauss exponent, i.e.~$p > p_s$,
provided that \eqref{sign0} is satisfied.
This is a sharp contrast
with  the usual Euclidean setting on $\R$,
where we have small data global well-posedness when $p > p_s$.
The proof of Theorem \ref{THM:blowup}  follows the basic lines in \cite{IW, O}
but we need to proceed more carefully  due to the almost periodic setting.
As in the proof of Theorem \ref{THM:LWP},
we make essential use of properties of functions in $\A_{\oo}(\R)$.
We present the proof of Theorem \ref{THM:blowup}
in Section \ref{SEC:blowup}.

\begin{remark}\label{REM:bound1}\rm

Suppose that \eqref{sign0} is satisfied.
Then,
it follows from  Theorem \ref{THM:blowup}
that any  solution to \eqref{ZNLS1} on $[0, \infty)$ or $(-\infty, 0]$
must satisfy
a global space-time bound.
For example, if $u$ is a solution to \eqref{ZNLS1} on $[0, \infty)$,
then we have
$\Re\ld \cdot \Im M(f) > 0$ and $ \Im \ld \cdot \Re M(f)  < 0$.
Then, we have
\begin{align*}
 \int_0^\infty M(|u(t)|^p )   dt  <\infty.
\end{align*}

\noi
Hence, in view of   Lemma \ref{LEM:AP1},
 any global solution on $[0, \infty)$ must
go to 0 as $t\to \infty$ in some averaged sense.
See Remark \ref{REM:bound2} for the proof.

\end{remark}

\begin{remark}
\rm
The notion of  almost periodic functions
can be extended to higher dimensions.\footnote{
Note that almost periodic functions on $\R^d$ are  almost
periodic in each variable, but  the converse is not true.}
One may extend
the results in this paper
to the higher dimensional setting.
We, however, focus on the one-dimensional case
for simplicity of the presentation.
\end{remark}

\section{Local well-posedness}\label{SEC:LWP}

In this section, we present the proof of Theorem \ref{THM:LWP}.
Fix  $\oo = \{\o_j\}_{j = 1}^\infty \in \R^{\mathbb N}$ in the following.

\subsection{On the function space $\A_{\oo}(\R)$}
We first go over some important properties of
the space $\A_{\oo}(\R)$
of almost periodic functions
with a common integral basis  $\oo$.

\begin{lemma}\label{LEM:AP3}
Given $f \in \A_{\oo}(\R)$, we have
\begin{align}
	 f(x) = \sum_{\n \in \Z^\mathbb{N}} \ft f(\pmb{\o}\cdot \pmb{n}) e^{i (\oo \cdot \n)x}.
\label{AP5}
\end{align}

\noi
Namely, $f$ is given by its Fourier series.\footnote{If $\oo$ is not linearly independent,
the right-hand side of \eqref{AP5} may not represent the Fourier series associated
to $f$.
Nontheless, the claim in Lemma \ref{LEM:AP3} holds.
 See Remark \ref{REM:FS}.}
Moreover,  $\A_{\oo}(\R)$ is a Banach algebra.
\end{lemma}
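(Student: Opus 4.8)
**The plan is to prove Lemma \ref{LEM:AP3} in two parts: first the Fourier representation \eqref{AP5}, then the Banach algebra structure.**

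For the Fourier series representation, the key observation is that membership in $\A_{\oo}(\R)$ forces absolute summability of the Fourier coefficients. Given $f \in \A_{\oo}(\R)$, by definition $\| f\|_{\A_{\oo}(\R)} = \| \ft f(\oo\cdot \n)\|_{\l^1_{\n}(\Z^\mathbb{N})} < \infty$. The first step is to observe that the formal series $\sum_{\n \in \Z^\mathbb{N}} \ft f(\pmb{\o}\cdot \pmb{n}) e^{i (\oo \cdot \n)x}$ converges absolutely and uniformly in $x$, since $|\ft f(\oo\cdot\n) e^{i(\oo\cdot\n)x}| = |\ft f(\oo\cdot\n)|$ and the latter is summable by hypothesis. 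By the Weierstrass $M$-test, the series converges uniformly to a continuous, and in fact almost periodic, limit (being a uniform limit of trigonometric polynomials, hence possessing the approximation property). Assuming $\oo$ is linearly independent as in Remark \ref{REM:FS}, the right-hand side of \eqref{AP5} is precisely the Fourier series associated to $f$, and it converges uniformly. I would then invoke Lemma \ref{LEM:AP2} to conclude that this uniformly convergent Fourier series converges to $f$ itself, giving \eqref{AP5}. Alternatively, since $\oo$ is linearly independent and $\s(f) \subset \oo$, one could apply Lemma \ref{LEM:AP2a} directly to obtain uniform convergence.

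For the Banach algebra property, I would proceed in standard fashion. First I would verify that $\A_{\oo}(\R)$ is a normed vector space: the norm $\|\cdot\|_{\A_{\oo}(\R)}$ inherits the triangle inequality and homogeneity from the $\l^1_\n(\Z^\mathbb{N})$-norm, and it vanishes only for $f \equiv 0$ since by the representation \eqref{AP5} the coefficients determine $f$. For completeness, I would take a Cauchy sequence $\{f_k\}$ in $\A_{\oo}(\R)$; then $\{\ft{f_k}(\oo\cdot\n)\}_k$ is Cauchy in $\l^1_\n(\Z^\mathbb{N})$, which is complete, so the coefficients converge in $\l^1$ to some sequence $\{c_\n\}$, and I would define $f$ via \eqref{AP5} with these limiting coefficients, checking that $f_k \to f$ in the $\A_{\oo}(\R)$-norm and that $f \in \A_{\oo}(\R)$. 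Finally, for the algebra (submultiplicativity) property, given $f, g \in \A_{\oo}(\R)$ with coefficients $a_\n = \ft f(\oo\cdot\n)$ and $b_\n = \ft g(\oo\cdot\n)$, the product $fg$ has Fourier coefficients given by the convolution over $\Z^\mathbb{N}$: the coefficient of $e^{i(\oo\cdot\n)x}$ in $fg$ is $\sum_{\n_1 + \n_2 = \n} a_{\n_1} b_{\n_2}$. Young's convolution inequality on $\l^1(\Z^\mathbb{N})$ then yields $\|fg\|_{\A_{\oo}(\R)} \leq \|f\|_{\A_{\oo}(\R)} \|g\|_{\A_{\oo}(\R)}$, establishing submultiplicativity.

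**The main obstacle I anticipate is the bookkeeping in the convolution identity for products**, specifically justifying that the Fourier coefficients of $fg$ are computed by convolution over the index lattice $\Z^\mathbb{N}$. When $\oo$ is linearly independent this is clean, since each lattice point $\n$ corresponds to a distinct frequency $\oo\cdot\n$ and the multiplication of the two absolutely convergent series can be rearranged freely (by Fubini for $\l^1$ sums). When $\oo$ is \emph{not} linearly independent, however, distinct multi-indices may map to the same real frequency, so one must carefully collect terms: the genuine Fourier coefficient at a frequency $\al$ is the sum $\sum_{\oo\cdot\n = \al} a_\n$, as noted in Remark \ref{REM:FS}. The absolute summability built into the $\A_{\oo}(\R)$-norm is exactly what rescues this—it permits unconditional rearrangement—so I would handle the general case by working at the level of the $\l^1_\n(\Z^\mathbb{N})$ coefficients (where convolution is transparent) and only afterward projecting down to actual frequencies, observing that the $\A_{\oo}$-norm dominates the $\l^1$-norm of the true Fourier coefficients regardless.
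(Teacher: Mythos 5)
Your proposal is correct and follows essentially the same route as the paper: the Weierstrass $M$-test combined with Lemma \ref{LEM:AP2} for the representation \eqref{AP5}, completeness of $\l^1$ for the Banach space structure, and the convolution/Young's inequality argument for submultiplicativity. The only cosmetic difference is in the completeness step, where you verify that the limit function is almost periodic by viewing it as a uniform limit of trigonometric polynomials, while the paper instead observes that the Cauchy sequence is Cauchy in $L^\infty(\R)$ and uses that $AP(\R)$ is closed in $L^\infty(\R)$; both verifications are equally valid.
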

	
\begin{proof}
Let $\{ r_j\}_{j = 1}^\infty$
be an enumeration of
 $\Z^{\mathbb N}$.
For $N \in \mathbb N$, we set
$B_N = \{r_j \}_{j = 1}^N$.
Given $f \in \A_{\oo}(\R)$,
 define a trigonometric polynomial $f_N$ by
\begin{align}
	 f_N(x) = \sum_{\n \in B_N } \ft f(\pmb{\o}\cdot \pmb{n}) e^{i (\oo \cdot \n)x},
\label{AP6}
\end{align}

\noi
Then, by the Weierstrass $M$-test, $f_N$ converges uniformly.
Hence,  by Lemma \ref{LEM:AP2}, we obtain \eqref{AP5}.
Note that $f_N$ also converges to $f$ in $\A_{\oo}(\R)$.

Given a Cauchy sequence  $\{ f_k\}_{k = 1}^\infty$ in  $\A_{\oo}(\R)$,
it follows from  the completeness of $\l^1$,
that $f_k$ converges to some function $f$ defined by the Fourier series \eqref{AP5}
with respect to the $\A_{\oo}(\R)$-norm.
We need to show that $f \in AP(\R)$.
Since $\{ f_k\}_{k = 1}^\infty$ is Cauchy with respect to the $\A_{\oo}(\R)$-norm,
then it is Cauchy in $L^\infty(\R)$.
Noting  that $AP(\R)$ is closed with respect to the $L^\infty$-norm,
it follows that
$f_k$  converges some function in $AP(\R)$.
Hence, by uniqueness of a limit, we conclude that $f \in AP(\R)$.
This proves completeness of $\A_{\oo}(\R)$.

Lastly,  given $f, g \in \A_{\oo}(\R)$, we have $f g \in AP(\R)$.
Moreover, we have $\s (fg) \subset \oo \cdot \Z^\mathbb{N}$
and
\begin{align*}
 \ft{fg}(\oo\cdot \n ) =
\sum_{\pmb{m} \in \Z^\mathbb{N}} \ft f(\oo \cdot (\n-\pmb{m})) \ft g(\oo \cdot \pmb{m}).
\end{align*}
	
\noi
Then,
by  Young's inequality, we have
\begin{align}
\|fg\|_{\A_{\oo}(\R)} \leq \|f\|_{\A_{\oo}(\R)} \|g\|_{\A_{\oo}(\R)}.
\label{algebra}
\end{align}

\noi
Namely, $\A_{\oo}(\R)$ is an algebra.
\end{proof}

\subsection{On the linear Schr\"odinger equation}

In this subsection, we study the properties
of solutions to the homogeneous and nonhomogeneous Schr\"odinger equations
in the almost periodic setting.
We  first consider the homogeneous linear Schr\"odinger equation:
\begin{align}
\begin{cases}
i \dt u + \dx^2 u = 0, \\
u|_{t = 0} = f \in \A_{\oo}(\R),
\end{cases}
\quad (t, x) \in \R\times \R.
\label{NLS2}
\end{align}

\noi
Given $f \in \A_{\oo}(\R)$ satisfying \eqref{AP5},
we define
the linear propagator $S(t) = e^{it\dx^2}$ by
\begin{align}
S(t) f
:= \sum_{\n \in \Z^\mathbb{N}} \ft f(\oo\cdot \n)e^{-i (\oo\cdot \n)^2 t} e^{i (\oo \cdot \n)x}.
\label{lin0}
\end{align}

\begin{lemma}\label{LEM:linear1}
Given $f \in \A_{\oo}(\R)$,
let $u = S(t) f$ be as in \eqref{lin0}.
\smallskip

\noi
\textup{(i)} The function $u = S(t) f$ satisfies \eqref{NLS2} in the distributional sense.
Namely, we have
\begin{align}
\iint_{\R\times \R} u \Big( -i \dt \phi + \dx^2 \phi\Big) dx dt
= 0,
\label{lin0a}
\end{align}

\noi
for any test function $\phi\in C^\infty_c(\R_t \times \R_x)$.

\smallskip

\noi
\textup{(ii)}
The function $u = S(t)f$ lies in $C(\R; \A_{\oo}(\R))$.
Moreover, we have
\begin{align}
\|S(t)f\|_{C (\R; \A_{\oo}(\R))}\leq \|f\|_{\A_{\oo}(\R)}.
\label{linear1}
\end{align}

\smallskip

\noi
\textup{(iii)}
The function $u = S(t)f$ is almost periodic in both $t$ and $x$.
Moreover, we have $u \in AP(\R; \A_{\oo}(\R))$.
Namely,
the function $u = S(t)f$ is almost periodic in $t$
with values in $\A_{\oo}(\R)$.

\end{lemma}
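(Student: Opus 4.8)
The plan is to prove all the assertions in part (iii) at once by approximating $u = S(t)f$ uniformly by trigonometric polynomials, the point being that the absolute summability of the Fourier coefficients of $f$ drives every conclusion. Fix an enumeration of $\Z^\mathbb{N}$ and the associated finite truncations $B_N$ as in the proof of Lemma~\ref{LEM:AP3}, and set
\begin{align*}
u_N(t,x) = \sum_{\n \in B_N} \ft f(\oo\cdot\n)\, e^{-i(\oo\cdot\n)^2 t}\, e^{i(\oo\cdot\n)x}.
\end{align*}
Being a finite sum, each $u_N$ admits two complementary readings: it is a genuine scalar trigonometric polynomial in the joint variable $(t,x) \in \R^2$, since every summand has the form $\ft f(\oo\cdot\n)\, e^{i(-(\oo\cdot\n)^2 t + (\oo\cdot\n)x)}$; and it is simultaneously an $\A_{\oo}(\R)$-valued trigonometric polynomial in $t$ alone, whose coefficient attached to the temporal frequency $-(\oo\cdot\n)^2$ is the function $x \mapsto \ft f(\oo\cdot\n) e^{i(\oo\cdot\n)x} \in \A_{\oo}(\R)$.

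The key step is a single uniform tail estimate. Since the temporal exponentials are unimodular and leave the $x$-dependence untouched, for every $t \in \R$ we have
\begin{align*}
\|u(t) - u_N(t)\|_{\A_{\oo}(\R)} = \sum_{\n \notin B_N} \big|\ft f(\oo\cdot\n)\big|,
\end{align*}
and the right-hand side tends to $0$ as $N \to \infty$, \emph{independently of $t$}, because $\sum_{\n} |\ft f(\oo\cdot\n)| = \|f\|_{\A_{\oo}(\R)} < \infty$. Hence $u_N \to u$ uniformly in $t$ in the $\A_{\oo}(\R)$-norm. Invoking the approximation-property characterization of almost periodicity in the Banach-space-valued setting (recorded in the excerpt), this shows at once that $u \in AP(\R; \A_{\oo}(\R))$.

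For the joint almost periodicity in $(t,x)$, I would reuse the same approximation together with the elementary bound $\|g\|_{L^\infty(\R)} \le \|g\|_{\A_{\oo}(\R)}$, valid since $|g(x)| \le \sum_{\n} |\ft g(\oo\cdot\n)|$ for any $g \in \A_{\oo}(\R)$. This gives
\begin{align*}
\sup_{(t,x)\in \R^2} \big|u(t,x) - u_N(t,x)\big| \le \sup_{t \in \R}\|u(t) - u_N(t)\|_{\A_{\oo}(\R)} \too 0,
\end{align*}
so $u$ is a uniform limit of trigonometric polynomials on $\R^2$ and is therefore almost periodic in $(t,x)$ jointly.

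The one point requiring genuine care is that the joint almost periodicity must be obtained \emph{directly}, and not deduced from the separate almost periodicity of $u$ in $t$ and in $x$: as recalled in the footnote above, almost periodicity in each variable does not in general imply joint almost periodicity. The device that circumvents this obstacle is precisely to carry out the approximation at the level of the partial sums $u_N$, which are at once trigonometric polynomials in $(t,x)$ and in the $\A_{\oo}(\R)$-valued sense, so that the same tail bound $\sum_{\n \notin B_N}|\ft f(\oo\cdot\n)|$ yields both conclusions simultaneously.
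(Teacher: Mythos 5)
Your treatment of part (iii) is correct, and it is essentially the paper's own argument: the truncations $u_N$ are simultaneously scalar trigonometric polynomials in $(t,x)$ and $\A_{\oo}(\R)$-valued trigonometric polynomials in $t$, the tail bound $\sum_{\n \notin B_N} |\ft f(\oo\cdot\n)| \to 0$ gives uniform convergence in both senses, and the approximation property then yields both $u \in AP(\R; \A_{\oo}(\R))$ and joint almost periodicity in $(t,x)$. Your remark that joint almost periodicity must be obtained directly, rather than deduced from almost periodicity in each variable separately, is exactly the point the paper makes as well.

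The gap is that the lemma has three parts and you prove only one. Part (ii) is a short step from what you have: a Banach-space-valued almost periodic function is continuous, so $u \in C(\R; \A_{\oo}(\R))$ follows from (iii), and the bound \eqref{linear1} is immediate from \eqref{lin0} since the temporal phases $e^{-i(\oo\cdot\n)^2 t}$ are unimodular, giving $\|S(t)f\|_{\A_{\oo}(\R)} = \|f\|_{\A_{\oo}(\R)}$ for every $t$. But part (i), the distributional identity \eqref{lin0a}, requires an argument you never give. The natural route (and the paper's) again uses your $u_N$: each $u_N$ is a finite sum of plane waves $e^{i((\oo\cdot\n)x - (\oo\cdot\n)^2 t)}$, hence a classical solution of the free equation \eqref{NLS2}, so it satisfies \eqref{lin0a} exactly; then one passes to the limit via
\begin{align*}
\bigg|\iint_{\R\times\R} (u - u_N)\Big(-i\dt\phi + \dx^2\phi\Big)\, dx\, dt\bigg|
\leq \|u - u_N\|_{L^\infty_{t,x}(\R\times\R)} \big(\|\phi\|_{W^{1,1}_t L^1_x} + \|\phi\|_{L^1_t W^{2,1}_x}\big) \too 0,
\end{align*}
using the uniform convergence you established and the compact support of $\phi$. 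All the ingredients for this are already present in your write-up, but the step itself --- and indeed any mention of parts (i) and (ii) --- is absent, so as it stands the proposal does not prove the lemma.
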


In the following, we do not make use of Lemma \ref{LEM:linear1} (iii).
We, however, decided to include it due to its independent interest.
The same comment applies to
Lemma \ref{LEM:linear2} (iii) below.

\begin{proof}
(iii)
Given an enumeration $\{ r_j\}_{j = 1}^\infty$
of  $\Z^{\mathbb N}$,
let $B_N = \{r_j \}_{j = 1}^N$,  $N \in \mathbb N$.
Define $f_N$ as in \eqref{AP6}
and $u_N$ by
\begin{align}
u_N(t)  = S(t) f_N
:= \sum_{\n \in B_N} \ft f(\oo\cdot \n)e^{-i (\oo\cdot \n)^2 t} e^{i (\oo \cdot \n)x}.
\label{lin1}
\end{align}

\noi
By writing  $u_N
= \sum_{\n \in B_N} c_N e^{-i (\oo\cdot \n)^2 t}
$ with $c_N = \ft f(\oo\cdot \n) e^{i (\oo \cdot \n)x} \in \A_{\oo}(\R)$,
we see that $u_N$ is a trigonometric polynomial
with values in the Banach space $\A_{\oo}(\R)$.
Moreover, given $\eps > 0$, there exists $N_0\in \mathbb N$
such that
\begin{align}
\sup_{t \in \R} \| u(t) - u_N(t) \|_{\A_{\oo}(\R)}
= \| f - f_N \|_{\A_{\oo}(\R)}
=  \sum_{\n \in \Z^\mathbb{N} \setminus B_N} |\ft f(\oo\cdot \n)| < \eps
\label{lin1a}
\end{align}

\noi
for all $N \geq N_0$.
Namely, $u = S(t) f$ is uniformly approximated by
the trigonometric polynomials $u_N$.
Therefore, $u$ is almost periodic in $t$ with values in $\A_{\oo}(\R)$.
Since $\A_{\oo}(\R) \subset L^\infty(\R)$,
\eqref{lin1a} implies that
 $u_N$ converges to $u$ in $L^\infty_{t, x}(\R\times \R)$.
Noting that $u_N$ is a trigonometric polynomial
in $t$ and $x$, uniformly converging to $u$,
the first claim in (iii) follows.

\smallskip
\noi
(i)
Since the sum in \eqref{lin1}
is over a finite set of indices,
we see that $u_N$ is a smooth solution to \eqref{NLS2} in the classical sense,
where the initial condition is replaced by $f_N$.
In particular, we have
\begin{align}
\iint_{\R\times \R} u_N \Big( -i \dt \phi + \dx^2 \phi\Big) dx dt
= 0,
\label{lin2}
\end{align}
	
\noi
for any test function $\phi\in C^\infty_c(\R_t \times \R_x)$.
Then, we have
\begin{align}
\bigg|\iint_{\R\times \R} (u - u_N ) & \Big( -i \dt \phi + \dx^2 \phi\Big) dx dt\bigg| \notag \\
& \leq \|u - u_N \|_{L^\infty_{t, x}(\R\times \R)}
\big( \|\phi\|_{W^{1, 1}_t L^1_x} + \|\phi\|_{L^1_t W^{2, 1}_x}\big)
\too 0,
\label{lin3}
\end{align}

\noi
as $N \to \infty$. Hence, \eqref{lin0a} follows from \eqref{lin2} and \eqref{lin3}.

\smallskip

\noi
(ii)
By (iii), we have $AP(\R; \A_{\oo}(\R))$.
Hence,
the first claim in (ii) follows since an almost periodic function is
continuous.
Lastly, note that \eqref{linear1} follows from \eqref{lin0}
and the definition of the $\A_{\oo}(\R)$-norm.
This completes the proof of Lemma \ref{LEM:linear1}.
\end{proof}

Next, we  consider the nonhomogeneous linear Schr\"odinger equation:
\begin{align}
\begin{cases}
i \dt u + \dx^2 u = F, \\
u|_{t = 0} = 0,
\end{cases}
\quad (t, x) \in \R\times \R,
\label{NLS2a}
\end{align}

\noi
for $F \in L^\infty([-T, T]; \A_{\oo}(\R))$,
$T>0$.
Then, the solution $u$ to \eqref{NLS2a} is formally given by
\begin{align}
u(t) & := -i  \int_0^t S(t - t') F(t') dt' \notag\\
& = -i  \int_0^t  \sum_{\n \in \Z^\mathbb{N}} \ft F(t', \oo\cdot \n)e^{-i (\oo\cdot \n)^2 (t-t')}  e^{i (\oo \cdot \n)x}dt' .
\label{lin7}
\end{align}

\begin{lemma}\label{LEM:linear2}
Let $T>0.$ Given $F \in L^\infty([-T, T]; \A_{\oo}(\R))$,
let $u$ be as in \eqref{lin7}.

\noi
\textup{(i)} The function $u $  satisfies \eqref{NLS2a} in the distributional sense.
Namely, we have
\begin{align}
\iint_{[-T, T]\times \R} u \Big( -i \dt \phi + \dx^2 \phi\Big) dx dt
= \iint_{[-T, T]\times \R} F \phi \, dx dt,
\label{lin8}
\end{align}

\noi
for any test function $\phi\in C^\infty_c([-T, T] \times \R)$.

\noi
\textup{(ii)}
The function $u$ defined in \eqref{lin7} lies in $C([-T, T]; \A_{\oo}(\R))$.\footnote{Indeed, it
follows from the proof that $u$ is uniformly continuous on $[-T, T]$, $T <\infty$,  with values in $ \A_{\oo}(\R)$.}
Moreover,  we have
\begin{align}
\bigg\|  \int_0^t S(t - t') F(t') dt'\bigg\|_{C([-T, T]; \A_{\oo}(\R))}
\le T \|F\|_{L^\infty([-T, T]; \A_{\oo}(\R))}.
\label{linear2}
\end{align}

\noi
\textup{(iii)}
Let $T = \infty$.
Suppose that $F(t)$ is almost periodic in $t$
with values in $\A_{\oo}(\R)$ whose Fourier series is given by
\begin{align}
F(t) \sim \sum_{j = 1}^\infty c_j e^{i \ld_j t}, \qquad c_j \in \A_{\oo}(\R),
\label{F1}
\end{align}

\noi
such that $\{c_j\}_{j = 1}^\infty \in \l^1(\mathbb N; \A_{\oo}(\R))$.
In addition, assume that the following non-resonance condition holds:
\begin{align}
\inf_{j \in \mathbb N} \inf_{\n \in \Z^\mathbb{N}} \big|\ld_j + (\oo\cdot \n)^2\big| > \dl > 0
\label{F2}
\end{align}

\noi
for some $\dl > 0$.	
Then, $u$ defined in \eqref{lin7} is almost periodic in $t$ and $x$.	
Moreover, we have $u \in AP(\R; \A_{\oo}(\R))$.
Namely,
it is almost periodic in $t$
with values in $\A_{\oo}(\R)$.

\end{lemma}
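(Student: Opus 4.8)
The plan is to handle the three parts in the order (ii), (i), (iii): the quantitative bound \eqref{linear2} underlies the dominated-convergence arguments needed in (i), and the explicit mode-by-mode solution I set up for (ii) feeds directly into (iii). For (ii) I would work entirely at the level of Fourier coefficients. Reading off \eqref{lin7}, the coefficient of $u(t)$ at frequency $\oo\cdot\n$ is
\[ \ft u(t,\oo\cdot\n) = -i\, e^{-i(\oo\cdot\n)^2 t}\int_0^t \ft F(t',\oo\cdot\n)\, e^{i(\oo\cdot\n)^2 t'}\,dt'. \]
Since $|\ft u(t,\oo\cdot\n)| \le \int_0^{|t|}|\ft F(t',\oo\cdot\n)|\,dt'$, summing in $\n$ and using Tonelli gives $\|u(t)\|_{\A_{\oo}(\R)} \le \int_0^{|t|}\|F(t')\|_{\A_{\oo}(\R)}\,dt'$, which is \eqref{linear2}. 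For continuity I would estimate $\ft u(t_1,\oo\cdot\n) - \ft u(t_2,\oo\cdot\n)$ by splitting off a piece bounded by $\int_{t_2}^{t_1}|\ft F(t',\oo\cdot\n)|\,dt'$ and a piece of the form $\big| e^{-i(\oo\cdot\n)^2 t_1} - e^{-i(\oo\cdot\n)^2 t_2}\big|\, a_\n$, where $a_\n := \int_{-T}^T|\ft F(t',\oo\cdot\n)|\,dt'$. Summing in $\n$, the first contribution is $\le |t_1-t_2|\,\|F\|_{L^\infty([-T,T];\A_{\oo}(\R))}$, while $\sum_\n a_\n < \infty$ lets me apply dominated convergence to the second (the phase differences are bounded by $2$ and tend to $0$ for each fixed $\n$). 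Hence $\|u(t_1)-u(t_2)\|_{\A_{\oo}(\R)}\to 0$ as $t_1\to t_2$, and continuity on the compact interval upgrades to uniform continuity, as claimed in the footnote.

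For (i) I would truncate in frequency. Enumerating $\Z^{\mathbb N}$ as in the proof of Lemma \ref{LEM:AP3}, let $B_N$ be the first $N$ multi-indices, set $F_N(t) = \sum_{\n\in B_N}\ft F(t,\oo\cdot\n)e^{i(\oo\cdot\n)x}$, and let $u_N$ be \eqref{lin7} with $F$ replaced by $F_N$. Each mode $\ft u_N(\cdot,\oo\cdot\n)$ is Lipschitz in $t$ and solves $i\dt \ft u - (\oo\cdot\n)^2\ft u = \ft F$ almost everywhere, so $u_N$ is smooth in $x$, Lipschitz in $t$, and satisfies \eqref{lin8} with $F_N$ in place of $F$ after integrating by parts against $\phi$. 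To pass to the limit, the bound from (ii) gives $\|u(t)-u_N(t)\|_{\A_{\oo}(\R)} \le \int_0^{|t|}\sum_{\n\notin B_N}|\ft F(t',\oo\cdot\n)|\,dt'$, which tends to $0$ for each $t$ by dominated convergence (dominated by $\|F(t')\|_{\A_{\oo}(\R)}$). Since $\A_{\oo}(\R)\embeds L^\infty(\R)$, a second application of dominated convergence in $t$ shows $\iint (u-u_N)(-i\dt\phi+\dx^2\phi)\,dxdt\to 0$, and likewise $\iint (F-F_N)\phi\,dxdt\to 0$; combining these yields \eqref{lin8}.

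For (iii) I would solve \eqref{lin7} explicitly mode by mode using the Fourier data of $F$. Writing $\ft F(t,\oo\cdot\n) = \sum_j \ft{c_j}(\oo\cdot\n)e^{i\ld_j t}$, the almost periodic particular solution of the mode ODE has coefficients $-\ft{c_j}(\oo\cdot\n)\big/\big(\ld_j + (\oo\cdot\n)^2\big)$, and enforcing the initial condition $\ft u(0,\oo\cdot\n)=0$ adds a homogeneous term $C_\n e^{-i(\oo\cdot\n)^2 t}$ with $C_\n = \sum_j \ft{c_j}(\oo\cdot\n)\big/\big(\ld_j+(\oo\cdot\n)^2\big)$. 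The non-resonance hypothesis \eqref{F2} is exactly what tames these divisors: since $|\ld_j+(\oo\cdot\n)^2| > \dl$, the particular part is $u_p(t) = \sum_j d_j e^{i\ld_j t}$ with $\|d_j\|_{\A_{\oo}(\R)}\le \dl^{-1}\|c_j\|_{\A_{\oo}(\R)}$, so $\{d_j\}\in \l^1(\mathbb N;\A_{\oo}(\R))$ and $u_p$ is a uniform-in-$t$ limit of $\A_{\oo}(\R)$-valued trigonometric polynomials, hence $u_p\in AP(\R;\A_{\oo}(\R))$. For the homogeneous part, $\sum_\n |C_\n|\le \dl^{-1}\sum_j\|c_j\|_{\A_{\oo}(\R)}<\infty$, so $g := \sum_\n C_\n e^{i(\oo\cdot\n)x}$ defines an element of $\A_{\oo}(\R)$ and $u_h = S(\cdot)g$; Lemma \ref{LEM:linear1}(iii) then gives $u_h\in AP(\R;\A_{\oo}(\R))$. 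Thus $u = u_p + u_h \in AP(\R;\A_{\oo}(\R))$, and since each summand is a uniform limit of joint trigonometric polynomials in $(t,x)$ with $\A_{\oo}(\R)\embeds L^\infty(\R)$, the function $u$ is almost periodic in $(t,x)$ as well.

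I expect the main obstacle in (i)--(ii) to be the merely $L^\infty$ (not continuous) time dependence of $F$: naive strong continuity of $S(t')\to I$ fails to be uniform over the family $\{F(t')\}_{t'}$, and the fix is to push all the $t$-regularity onto the summable dominating sequence $a_\n$ and invoke dominated convergence. In (iii) the delicate point is the small divisors $\ld_j+(\oo\cdot\n)^2$, controlled precisely by the non-resonance condition \eqref{F2}, combined with the observation that the homogeneous remainder is a single free evolution $S(t)g$, so that the already-established Lemma \ref{LEM:linear1}(iii) can be reused rather than re-proved.
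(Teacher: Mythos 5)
Your proposal is correct and essentially reproduces the paper's proof: frequency truncation plus dominated convergence for (i), coefficient-level $\l^1$ estimates with a tail/phase splitting and dominated convergence for (ii) (the paper does the same thing via the approximants $u_{N_0}$ and the mean value theorem), and explicit term-by-term integration of the Fourier series of $F$ under the non-resonance condition \eqref{F2} for (iii). The only differences are organizational: in (iii) you split the paper's double series \eqref{F7} into a forced part $\sum_j d_j e^{i\ld_j t}$ plus a free evolution $S(t)g$ and recycle Lemma \ref{LEM:linear1}(iii), whereas the paper bounds the combined double series directly with the same small-divisor estimate; also, your starting identity $\ft F(t,\oo\cdot\n)=\sum_j\ft c_j(\oo\cdot\n)e^{i\ld_j t}$ deserves the short justification the paper gives in \eqref{F4}--\eqref{F6}, namely that by Lemma \ref{LEM:AP2} the uniformly convergent series \eqref{F1} actually converges to $F$.
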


\begin{proof}
(i) Since $F \in  L^\infty([-T, T]; \A_{\oo}(\R))$,
we have
\begin{align}
F(t, x) = \sum_{\n \in \Z^\mathbb{N}} \ft F(t, \oo\cdot \n) e^{i (\oo \cdot \n)x},
\label{lin9}
\end{align}

\noi
for almost every $t \in [-T, T]$.
As before,
given an  enumeration $\{ r_j\}_{j = 1}^\infty$
of  $\Z^{\mathbb N}$,
we define $F_N$ by
\begin{align}
F_N(t, x) = \sum_{\n \in B_N} \ft F(t, \oo\cdot \n) e^{i (\oo \cdot \n)x},
\label{lin9a}
\end{align}

\noi
for $t \in [-T, T]$ such that \eqref{lin9} holds,
where $B_N = \{r_j \}_{j = 1}^N$,  $N \in \mathbb N$.
In the following, by setting $F(t) = 0$ on the exceptional set of measure 0 in $[-T, T]$,
we simply assume that \eqref{lin9}
and \eqref{lin9a} hold for all $t \in [-T, T]$.

Now, define $u_N$ by
\begin{align}
u_N(t) := -i  \int_0^t S(t - t') F_N(t') dt'
= -i  \sum_{\n \in B_N} \int_0^t \ft F(t', \oo\cdot \n)e^{-i (\oo\cdot \n)^2 (t-t')} dt' e^{i (\oo \cdot \n)x}.
\label{lin10}
\end{align}

\noi
Since $\ft F(t, \oo\cdot \n) \in L^\infty([-T, T]) \subset L^1([-T, T])$,
we see that $u_N$ is absolutely continuous in $t$ and smooth in $x$.
Then, it is easy to see that
such $u_N$ satisfies \eqref{NLS2a}
with $F$ replaced by $F_N$ for almost every $t \in [-T, T]$
and every $x \in \R$.
In particular, we have
\begin{align}
\iint_{[-T, T]\times \R} u_N \Big( -i \dt \phi + \dx^2 \phi\Big) dx dt
= \iint_{[-T, T]\times \R} F_N \phi \, dx dt
\label{lin11}
\end{align}

\noi
for any test function $\phi\in C^\infty_c([-T, T] \times \R)$.

Note that $S(t-t') F_N(t')$ is given by
\begin{align*}
S(t-t') F_N(t')  =  \sum_{\n \in B_N} \ft F(t', \oo\cdot \n)e^{-i (\oo\cdot \n)^2 (t-t')}  e^{i (\oo \cdot \n)x}.
\end{align*}

\noi
Then,
we see that $S(t-t') F_N(t')$ converges to $S(t-t') F(t') $ in $\A_{\oo}(\R)$
for each fixed $t'  \in [-T, T]$ (and $t \in \R$).
Moreover, we have
$\sup_N\|S(t-t') F_N(t')\|_{\A_{\oo}(\R)}
\leq \|F(t')\|_{\A_{\oo}(\R)} \in L^1_{t'}([-T, T])$.
Hence, it follows from  Dominated Convergence Theorem that
$u_N(t, x)$ converges to $u(t, x)$ for every $(t, x) \in [-T, T]\times \R$
and  we have
\begin{align}
u(t)
 = -i   \sum_{\n \in \Z^\mathbb{N}} \int_0^t  \ft F(t', \oo\cdot \n)e^{-i (\oo\cdot \n)^2 (t-t')}  dt' e^{i (\oo \cdot \n)x}.
\label{lin11a}
\end{align}

\noi
Also, note that, for each $t\in [-T, T]$,
$F_N(t)$ converges to $F(t)$ in $\A_{\oo}(\R)$.
In particular, $F_N(t, x)$ converges
to $F(t, x)$ for every $(t, x) \in [-T, T]\times \R$.
Moreover, we have
\begin{align*}
|F_N(t, x)| \leq \| F\|_{L^\infty([-T,T]; \A_{\oo}(\R))}
\qquad
\text{and}
\qquad
|u_N(t, x)| \leq T\| F\|_{L^\infty([-T,T]; \A_{\oo}(\R))}
\end{align*}

\noi
for all $(t, x) \in [-T, T] \times \R$.
Therefore,
by Dominated Convergence Theorem applied
to both sides of \eqref{lin11},
we obtain
\eqref{lin8}.

\smallskip

\noi
(ii)
Note that
$ \sum_{\n \in \Z^\mathbb{N} \setminus B_N} |\ft F(t, \oo\cdot \n)|$ converges to 0
for each $t \in [-T, T]$ as $N \to \infty$
and that
$ \sum_{\n \in \Z^\mathbb{N} \setminus B_N} |\ft F(t, \oo\cdot \n)| \leq \| F (t) \|_{\A_{\oo}(\R)}
\in L^1([-T, T])$.
Then, by Dominated Convergence Theorem, we have
\begin{align}
\lim_{N \to \infty} \int_{-T}^T \sum_{\n \in \Z^\mathbb{N} \setminus B_N} |\ft F(t, \oo\cdot \n)| =0.
\label{lin11b}
\end{align}

\noi
Hence, it follows from \eqref{lin10}, \eqref{lin11a}, and \eqref{lin11b}
that $u_N$ converges to $u$ in $L^\infty([-T, T]; \A_{\oo}(\R))$.

Fix $t \in [-T, T]$
and $\eps > 0$.
Then,
there exists
 $N_0 \in \mathbb{N}$ sufficiently large such that
\begin{align}
\|u_{N_0} - u \|_{L^\infty([-T, T]; \A_{\oo}(\R))} < \frac{\eps}{4}.
\label{lin12}
\end{align}

\noi
Also,
from \eqref{lin10}
and Mean Value Theorem,
there exists $\dl_0 >0 $ such that
\begin{align}
\| u_{N_0}(t+\dl)  -  u_{N_0}(t)\|_{\A_{\oo}(\R)}
&   \leq
    \int_t^{t+\dl}\sum_{\n \in B_{N_0}} |\ft F(t', \oo\cdot \n)|dt'  \notag \\
& \hphantom{XX} +
 \int_0^t \sum_{\n \in B_{N_0}} \big| \ft F(t', \oo\cdot \n)
 \big(e^{-i (\oo\cdot \n)^2 (t+\dl)} - e^{-i (\oo\cdot \n)^2 t} \big)\big| dt'
\notag \\
& \leq |\dl|\big( 1+  T \max_{\n \in B_{N_0}} (\oo\cdot\n)^2 \big)\|F\|_{L^\infty([-T, T]; \A_{\oo}(\R))}
< \frac{\eps}{2}\label{lin13}
\end{align}
	
\noi
for all $|\dl| < \dl_0$ such that $t+\dl \in [-T, T]$.
Therefore, from \eqref{lin12} and \eqref{lin13}, we have
\begin{align*}
\| u(t+\dl) - u(t)\|_{\A_{\oo}(\R)}
\leq \| u_{N_0}(t+\dl) - u_{N_0}(t)\|_{\A_{\oo}(\R)} +
\frac{\eps}{2}
 < \eps
\end{align*}

\noi
for all $|\dl| < \dl_0$ such that $t+\dl \in [-T, T]$.
This shows that $u  \in C(\R; \A_{\oo}(\R))$.
Lastly, note that \eqref{linear2} follows from \eqref{lin11a}
and the definition of the $\A_{\oo}(\R)$-norm.

\smallskip

\noi
(iii) By assumption on $c_j$ and Lemma \ref{LEM:AP3},
the Fourier series (in $x$) associated to $c_j$ converges uniformly
to $c_j$ and
we have
\begin{align}
c_j(x) =  \sum_{\n \in \Z^\mathbb{N}} \ft c_j(\oo\cdot \n) e^{i (\oo \cdot \n)x},
\label{F4}
\end{align}

\noi
for all $j \in \mathbb N$.
Given $\eps > 0$, choose $J_0 \in \mathbb{N}$ such that
\begin{align*}
\sup_{t \in \R} \bigg\|\sum_{j = J_0}^\infty c_j e^{i \ld_j t}\bigg\|_{\A_{\oo}(\R)}
\leq \sum_{j = J_0}^\infty \| c_j\|_{\A_{\oo}(\R)} < \eps.
\end{align*}

\noi
In particular, the Fourier series (in $t$) associated to $F$ converges in $\A_{\oo}(\R)$ uniformly in $t$.
Hence, by Lemma \ref{LEM:AP2},
they must converge to $F$.
See also Theorem 6.14 in \cite{C}.
Then, from \eqref{F4},  we have
\begin{align}
F(t, x) = \sum_{j = 1}^\infty c_j(x) e^{i \ld_j t}
= \sum_{j = 1}^\infty \sum_{\n \in \Z^\mathbb{N}} \ft c_j(\oo\cdot \n) e^{i (\oo \cdot \n)x}
e^{i \ld_j t}.
\label{F5}
\end{align}

\noi
Note that the series on the right-hand side of 	\eqref{F5}
converges absolutely and uniformly in $t$ and $x$.
Comparing \eqref{lin9a} and \eqref{F5}, we conclude that
\begin{align}
\ft F(t, \oo\cdot\n)
= \sum_{j = 1}^\infty \ft c_j(\oo\cdot \n)
e^{i \ld_j t}.
\label{F6}
\end{align}

\noi
Then, from \eqref{lin11a} and \eqref{F6} with Dominated Convergence Theorem, we have
\begin{align}
u(t)
&  = -i   \sum_{\n \in \Z^\mathbb{N}}
\int_0^t  \sum_{j = 1}^\infty \ft c_j(\oo\cdot \n)
e^{i (\ld_j + (\oo\cdot \n)^2) t'}  dt' e^{-i (\oo\cdot \n)^2 t}e^{i (\oo \cdot \n)x} \notag \\
&  =    \sum_{\n \in \Z^\mathbb{N}}
 \sum_{j = 1}^\infty \ft c_j(\oo\cdot \n)
\frac{e^{i  (\oo\cdot \n)^2 t} - e^{i \ld_j  t} }{\ld_j + (\oo\cdot \n)^2}
e^{i (\oo \cdot \n)x}.
\label{F7}
\end{align}

\noi
In view of the non-resonance condition \eqref{F2},
the double series on the right-hand side of \eqref{F7}
converges absolutely and uniformly in $t$ and $x$.
This proves almost periodicity of $u$ in both $t$ and $x$.
Moreover, noting that
$\ft c_j(\oo\cdot \n) e^{i (\oo \cdot \n)x} \in \A_{\oo}(\R)$
with the norm given by $|\ft c_j(\oo\cdot \n) |$
for each $j \in \mathbb{N}$ and $\n \in \Z^\mathbb{N}$,
we see that $u$ is a limit of $\A_{\oo}(\R)$-valued
trigonometric polynomials, uniformly in $t$,
i.e.~$u$ has the approximation property.
Therefore,
$u$ is almost periodic in $t$ with values in
$\A_{\oo}(\R)$.
\end{proof}

\subsection{Fixed point argument}

Finally, we are ready to prove Theorem \ref{THM:LWP}.
Given $f \in \A_{\oo}(\R)$, define $\G = \G_f$ by
\begin{align}
\G u(t) = \G_f u (t): = S(t) f -i \int_0^t S(t - t') \N(u)(t') dt'.
\label{NLS4}
\end{align}

\noi
We show that
 $\G$ is a contraction on
\[ B = \big\{ u \in C([-T, T]; \A_{\oo}(\R)):\,  \|u\|_{C([-T, T]; \A_{\oo}(\R))} \leq 2 \|f \|_{\A_{\oo}(\R)}\big\}\]

\noi
for some $T>0.$
Indeed,
by \eqref{algebra} and Lemmata \ref{LEM:linear1} and \ref{LEM:linear2},
we have
\begin{align*}
\|u\|_{C([-T, T]; \A_{\oo}(\R))}
\leq \| f\|_{\A_{\oo}(\R)} + T \|u\|^p_{C([-T, T]; \A_{\oo}(\R))},
\end{align*}

\noi
and
\begin{align*}
\|u - v \|_{C([-T, T]; \A_{\oo}(\R))}
\leq  C T \Big(\|u\|^{p-1}_{C([0, T]; \A_{\oo}(\R))}
+ \|v\|^{p-1}_{C([-T, T]; \A_{\oo}(\R))}\Big)
\|u-v\|_{C([-T, T]; \A_{\oo}(\R))}
\end{align*}

\noi
for $u, v \in B$.
Therefore, we conclude that
$\G$ is a contraction on $B$
as long as
\[ T < \frac{1}{ 2^{p+1} C\|f\|^{p-1}_{\A_{\oo}(\R)}}.\]

\noi
The  Lipschitz continuity of the solution map on $\A_{\oo}(\R)$
follows from a similar argument.

\section{Finite time blowup solutions}
\label{SEC:blowup}

In this section, we present the proof of Theorem \ref{THM:blowup}.
We only work on the positive time intervals $[0, T)$, $0< T \leq \infty$,
and prove Theorem \ref{THM:blowup} (i) under \eqref{sign1}.
Note that Theorem \ref{THM:blowup} (ii) easily follows
from Theorem \ref{THM:blowup} (i).
Given a solution  $u$  to \eqref{ZNLS1} on $(-T, 0]\times \R$,
the function   $v(t, x) := u(-t, x)$  satisfies $i \dt v - \dx^2 v = (-\ld) |v|^p$
 on  $[0, T)\times \R$.
Then, it suffices to  note that
 the sign change in front of  $\dx^2 v$ does not affect the proof
 of Theorem \ref{THM:blowup} (i).
In the following,
fix $p\in 2\mathbb{N}$.
and $\oo = \{\o_j\}_{j = 1}^\infty \in \R^{\mathbb N}$.

First, let us define the notion of weak solutions as in \cite{IW, O}.
\begin{definition} \label{DEF1} \rm
Let $T> 0$. We say that $u$ is a local  weak solution to \eqref{ZNLS1}
on $[0, T)$ with initial condition $u|_{t = 0} = f \in AP(\R)$
if  $u \in L^\infty([0, T); \A_{\oo}(\R))$ and
\begin{align} \label{ZNLS2}
\int_0^T \int_{\R} u\Big(-i \dt \phi + \dx^2 \phi\Big) dx dt
= i \int_{\R} f(x) \phi( 0, x) dx
+ \ld \int_0^T \int_{\R} |u|^p \phi \, dx dt,
\end{align}

\noi
for any test function $\phi \in C^\infty_c((-\infty, T)\times \R)$.
If $T>0$ can be made arbitrarily large,
then we say that $u$ is a global weak solution on $[0, \infty)$.
\end{definition}

We now present two
important  propositions for proving Theorem \ref{THM:blowup}.

\begin{proposition} \label{PROP:blowup}
Assume \eqref{sign1}.
If $u$ is a global-in-time weak solution to  \eqref{ZNLS1} on $[0, \infty)$,
then $u(t) = 0$ for almost every $t \in [0, \infty)$.
\end{proposition}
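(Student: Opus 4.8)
The plan is to reduce the PDE, in its weak form \eqref{ZNLS2}, to a scalar ODE for the mean value $m(t):=M(u(t))$ and then to run a Riccati-type blowup argument. The point is that, since $p$ is even, the nonlinearity $|u|^p$ is real and nonnegative, so $g(t):=M(|u(t)|^p)\ge 0$; applying the mean value $M$ to $i\dt u+\dx^2 u=\ld|u|^p$ and using that the mean of a spatial derivative vanishes, one expects
\begin{align}
m'(t)=-i\ld\, g(t),\qquad m(0)=M(f).
\label{meanode}
\end{align}
The whole argument then rests on the elementary observation that, under \eqref{sign1}, one of $\Re m$, $\Im m$ is forced to grow in absolute value, which via Jensen's inequality feeds back into $g$.

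To make \eqref{meanode} rigorous in the almost periodic setting I would not test against a spatial constant (which is not admissible), but against $\phi(t,x)=\psi(t)\,\eta(x/R)$ with $\psi\in C^\infty_c((-\infty,T))$ and $\eta\in C^\infty_c(\R)$, $\int_\R\eta=1$, then multiply \eqref{ZNLS2} by $R^{-1}$ and let $R\to\infty$. The key input is the averaging fact that $R^{-1}\int_\R h(x)\eta(x/R)\,dx\to M(h)$ for every $h\in AP(\R)$ (immediate for trigonometric polynomials by Riemann--Lebesgue, hence for all of $AP(\R)$ by uniform approximation). This has three consequences: the Laplacian term carries an extra factor $R^{-2}$ together with $\int_\R\eta''=0$ and therefore vanishes; the data and nonlinear terms converge to $i\psi(0)M(f)$ and $\ld\int_0^T\psi\,g\,dt$; and the time-derivative term converges to $-i\int_0^T\psi'\,m\,dt$. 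Passing the limit inside the $t$-integral is justified by dominated convergence, since $R^{-1}\int_\R|\,\cdot\,|\,\eta(x/R)\,dx$ is bounded by $\|u(t)\|_{\A_{\oo}(\R)}\|\eta\|_{L^1}$ and $u\in L^\infty([0,T];\A_{\oo}(\R))$. The resulting identity is precisely the distributional form of \eqref{meanode}; since $g\in L^\infty_{\mathrm{loc}}$, the function $m$ is locally Lipschitz and $m(t)=M(f)-i\ld\int_0^t g(s)\,ds$ for every $t$.

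With \eqref{meanode} in hand, write $\ld=a+bi$ and $G(t):=\int_0^t g(s)\,ds\ge 0$, so that
\begin{align}
\Re m(t)=\Re M(f)+b\,G(t),\qquad \Im m(t)=\Im M(f)-a\,G(t).
\label{components}
\end{align}
By Hölder's inequality for the mean (equivalently Jensen, as $M$ is a normalized positive functional) one has $|m(t)|^p=|M(u(t))|^p\le M(|u(t)|^p)=g(t)$, and since $p$ is even, $g(t)\ge\max\big((\Re m(t))^p,(\Im m(t))^p\big)$. Now I invoke \eqref{sign1}. If $\Im\ld\cdot\Re M(f)>0$, then in \eqref{components} the two summands defining $\Re m(t)$ carry the same sign for all $t\ge 0$, so $|\Re m(t)|=|\Re M(f)|+|b|\,G(t)$; if instead $\Re\ld\cdot\Im M(f)<0$, the same reasoning applied to $\Im m(t)$ gives $|\Im m(t)|=|\Im M(f)|+|a|\,G(t)$. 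In either disjunct we obtain, with constants $c>0$ and $k>0$,
\begin{align}
G'(t)=g(t)\ge\big(c+k\,G(t)\big)^p\qquad\text{for a.e. }t\ge 0.
\label{riccati}
\end{align}

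It remains to note that \eqref{riccati} is incompatible with global existence. The comparison solution $H$ of $H'=(c+kH)^p$, $H(0)=0$, satisfies $(c+kH(t))^{1-p}=c^{1-p}-(p-1)k\,t$ and hence blows up at the finite time $T^\ast=\frac{c^{1-p}}{(p-1)k}$; since $G(0)=H(0)$ and the right-hand side of \eqref{riccati} is increasing, the standard comparison principle yields $G(t)\ge H(t)$, forcing $G(t)\to\infty$ as $t\nearrow T^\ast$. This contradicts $G(t)=\int_0^t M(|u(s)|^p)\,ds\le t\,\sup_{[0,t]}\|u\|_{\A_{\oo}(\R)}^p<\infty$, which holds for every finite $t$ because $u\in L^\infty([0,t];\A_{\oo}(\R))$. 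Thus a global weak solution cannot have $g\not\equiv 0$, so necessarily $M(|u(t)|^p)=0$ for a.e.\ $t$, and Lemma \ref{LEM:AP1} gives $u(t)=0$ for a.e.\ $t$, as claimed. I expect the main obstacle to be the rigorous reduction to \eqref{meanode}: verifying that the averaging limit simultaneously annihilates the $\dx^2$ term and converts the spatial integrals into mean values, uniformly enough to commute with the time integral, together with the sign bookkeeping needed to extract the single favourable component in \eqref{components} from the disjunctive hypothesis \eqref{sign1}.
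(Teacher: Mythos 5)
Your proposal is correct, but it takes a genuinely different route from the paper. The paper's proof is Zhang's test-function method: it tests \eqref{ZNLS2} against the two-parameter cutoff $\phi_{R,L}^{p'}$ with $\phi_{R,L}(t,x)=\theta_R(t)\eta_L(x)$, uses \eqref{sign1} (via \eqref{P0a}) to discard the initial-data term by sign, absorbs the nonlinearity by H\"older's inequality to obtain $\I_{R, L}\lesssim \big(R^{-\frac 2p}+R^{\frac{2}{p'}}L^{-2+2\eps}\big)\I_{R, L}^{\frac 1p}$, hence a bound on $\I_{R,L}$ uniform in $R,L$ satisfying \eqref{P0c}, and then passes to the iterated limits $L\to\infty$, $R\to\infty$ to conclude $\int_0^\infty M(|u(t)|^p)\,dt=0$, so $u\equiv 0$ a.e.\ by Lemma \ref{LEM:AP1}. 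You instead extract an exact ODE for the mean $m(t)=M(u(t))$ (i.e.\ the zeroth Fourier mode) by testing against $\psi(t)\eta(x/R)$ with the normalization $R^{-1}$, and then close a Riccati-type inequality $G'\ge (c+kG)^p$ for $G(t)=\int_0^t M(|u(s)|^p)\,ds$ via Jensen's inequality $|m|^p\le M(|u|^p)$ and the sign structure of \eqref{sign1}; comparison forces $G$ to blow up by time $c^{1-p}/((p-1)k)$, contradicting the finiteness of $G$ that follows from $u\in L^\infty([0,T);\A_{\oo}(\R))$ for every $T$. Your analytic steps check out: the averaging lemma ($R^{-1}\int h(x)\eta(x/R)\,dx\to M(h)\int\eta$ for $h\in AP(\R)$) is correct by Riemann--Lebesgue plus uniform approximation, dominated convergence is justified by $\A_{\oo}(\R)\subset L^\infty(\R)$, and the sign bookkeeping in both disjuncts of \eqref{sign1} is right. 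Two remarks on the comparison. First, your conclusion is formally stronger: you show that under \eqref{sign1} no global weak solution exists at all, so the stated implication holds vacuously; this is consistent with the paper, since the conclusion $u\equiv 0$ a.e.\ inserted into \eqref{ZNLS2} would force $f\equiv 0$, contradicting $M(f)\ne 0$, so the proposition as stated also encodes nonexistence (and that is exactly how it is used in the proof of Theorem \ref{THM:blowup}). Second, as to what each method buys: yours is shorter, yields a quantitative upper bound on the lifespan in terms of $M(f)$ and $\ld$, and makes transparent how \eqref{sign1} enters (one component of $m$ grows coherently); the paper's method never needs the exact mean-value ODE, only one-sided estimates after H\"older, which makes it more robust to perturbations of the equation, proves the a.e.-vanishing statement directly, and is the same computation that produces the a priori space-time bound of Remark \ref{REM:bound2}.
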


Our solution constructed in Theorem \ref{THM:LWP}
satisfies
 the following Duhamel formulation:
\begin{align}
\label{ZNLS3}
u(t) = S(t) f -i  \ld \int_0^t S(t-t') |u(t')|^p dt'.
\end{align}

\noi
The next proposition guarantees
that our solution $u$ in Theorem \ref{THM:LWP}
indeed satisfies
the weak formulation \eqref{ZNLS2}.

\begin{proposition} \label{PROP1}
Given $ f \in \A_{\oo}(\R)$,
 if $u\in C([0, T);\A_{\oo}(\R))$ satisfies the Duhamel formulation \eqref{ZNLS3}
on $[0, T)$ for some  $T>0$, then it is a weak solution to \eqref{ZNLS1} on $[0, T)$ in the sense of Definition \ref{DEF1}.
\end{proposition}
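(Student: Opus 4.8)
The plan is to start from the Duhamel formulation \eqref{ZNLS3} and show directly that such a $u$ satisfies the weak formulation \eqref{ZNLS2}. The essential point is that the two linear building blocks appearing in \eqref{ZNLS3}, namely the homogeneous propagator $S(t)f$ and the Duhamel integral, already satisfy their own distributional identities by Lemma \ref{LEM:linear1} (i) and Lemma \ref{LEM:linear2} (i). So the strategy is really to decompose $u = S(t)f + w$, where $w(t) = -i\ld\int_0^t S(t-t')|u(t')|^p dt'$ is the Duhamel term with forcing $F = \ld|u|^p$, and then add the two weak identities together with the correct accounting of the initial-data boundary term.

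First I would verify that the forcing $F(t) := \ld|u(t')|^p$ genuinely lies in the class to which Lemma \ref{LEM:linear2} applies. Since $u \in C([0,T);\A_{\oo}(\R))$ and $\A_{\oo}(\R)$ is a Banach algebra by Lemma \ref{LEM:AP3}, the estimate \eqref{algebra} gives $\||u|^p\|_{\A_{\oo}(\R)} \leq \|u\|_{\A_{\oo}(\R)}^p$ (writing $|u|^p = u^{p/2}\cj{u}^{p/2}$, which is admissible since $p \in 2\mathbb{N}$), so $F \in C([0,T);\A_{\oo}(\R)) \subset L^\infty([0,T');\A_{\oo}(\R))$ for every $T' < T$. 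Hence Lemma \ref{LEM:linear2} (i) applies on $[0,T']$ and yields
\begin{align*}
\iint_{[0, T']\times \R} w \Big( -i \dt \phi + \dx^2 \phi\Big) dx dt
= \iint_{[0, T']\times \R} \ld |u|^p \, \phi \, dx dt.
\end{align*}
The only subtlety here is that Lemma \ref{LEM:linear2} is stated on a symmetric interval $[-T,T]$ with test functions supported there; since we work only on $[0,T)$ with test functions in $C^\infty_c((-\infty,T)\times\R)$, I would choose $T'$ so that $\supp\phi \cap ([0,\infty)\times\R) \subset [0,T')\times\R$ and note that the argument of Lemma \ref{LEM:linear2} (i) is insensitive to the left endpoint, since the driving identity \eqref{lin11} holds for any subinterval.

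Next I would treat the homogeneous part $S(t)f$. Lemma \ref{LEM:linear1} (i) gives the source-free identity \eqref{lin0a} on all of $\R$, but with test functions in $C^\infty_c(\R_t\times\R_x)$, whereas \eqref{ZNLS2} uses half-open test functions not vanishing at $t=0$ and carries an explicit boundary term $i\int_\R f(x)\phi(0,x)dx$. The main obstacle, therefore, is this boundary term: integrating by parts in $t$ against a test function supported up to $t=T$ but not vanishing at $t=0$ produces exactly a boundary contribution at $t=0$, and I must check it equals $i\int_\R f\,\phi(0,\cdot)\,dx$. Concretely, I would approximate $S(t)f$ by the trigonometric polynomials $u_N = S(t)f_N$ from \eqref{lin1}, which are genuine smooth solutions of \eqref{NLS2}; for each fixed frequency $\n$ I would integrate $\int_0^T (-i\dt\phi + \dx^2\phi)$ against $e^{-i(\oo\cdot\n)^2 t}e^{i(\oo\cdot\n)x}$ by parts in $t$, using $i\dt(e^{-i(\oo\cdot\n)^2 t}) = (\oo\cdot\n)^2 e^{-i(\oo\cdot\n)^2 t} = -\dx^2$ acting on the spatial exponential to cancel the interior terms, leaving precisely the $t=0$ boundary term $i\,\ft f(\oo\cdot\n)\int_\R e^{i(\oo\cdot\n)x}\phi(0,x)dx$. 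Summing over $\n \in B_N$ gives $i\int_\R f_N(x)\phi(0,x)dx$, and then passing $N\to\infty$ via the uniform convergence $f_N \to f$ in $L^\infty$ (cf.\ \eqref{lin1a}) and dominated convergence — exactly as in \eqref{lin3} — recovers the identity for $S(t)f$ with its boundary term. Finally, adding the homogeneous identity (with boundary term $i\int_\R f\,\phi(0,\cdot)dx$) to the inhomogeneous identity for $w$ (with no boundary term, since $w|_{t=0}=0$) reproduces \eqref{ZNLS2} for $u = S(t)f + w$, completing the proof.
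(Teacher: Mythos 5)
Your proposal is correct and takes essentially the same route as the paper: the same decomposition $u = S(t)f + \mathcal{D}(u)$, the same truncation and integration-by-parts treatment of the linear part to produce the boundary term $i\int_\R f(x)\phi(0,x)\,dx$, and the same handling of the Duhamel term via the argument of Lemma \ref{LEM:linear2} (i). Indeed, the paper also re-runs that truncation argument on $[0,T)$ rather than citing the lemma verbatim --- exactly because of the test-function/interval mismatch you flag --- with the boundary terms killed by $\mathcal{D}_N(0,x)=\phi(T,x)=\dx\phi(T,x)=0$.
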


We first prove Theorem \ref{THM:blowup}
using Propositions \ref{PROP:blowup} and \ref{PROP1}.
Then, we present the proofs of Propositions \ref{PROP:blowup} and \ref{PROP1}.

Let $u \in C([0, T_+); \A_{\oo}(\R))$ be the solution to \eqref{ZNLS1}
with $u|_{t = 0} = f \in \A_{\oo}(\R)$, where $[0, T_+)$ is the forward maximal time interval
of existence.
Suppose that \eqref{sign1} holds.
On the one hand, this implies $f\ne 0$.
Then, it follows from
 uniqueness in Theorem \ref{THM:LWP} that
 $u \not\equiv 0$.
On the other hand,
from Proposition \ref{PROP:blowup}
and the continuity of $u$ in time, we conclude that  $u \equiv 0$.
This is a contradiction.
Therefore,
the  solution $u$ can not be global on $[0, \infty)$
and must blow up at some finite time $T_+>0$.

Theorem \ref{THM:LWP}
guarantees  the following blowup alternative;
if $u$ is a solution in $C([0, T);\A_{\oo}(\R))$, then
either (a) there exists $\eps > 0$ such that $u$ can be extended
to $[0, T+\eps)$ or (b) $\liminf_{t \nearrow T} \|u(t)\|_{\A_{\oo}(\R)} = \infty.$
Since $T_+ < \infty$,
we conclude that
 $\liminf_{t \nearrow T_+} \|u(t)\|_{\A_{\oo}(\R)} = \infty.$
 This completes the proof of Theorem \ref{THM:blowup}.

In the remaining part of this section, we present the proofs of
Propositions \ref{PROP:blowup} and  Proposition \ref{PROP1}.

\begin{proof}[Proof of Proposition \ref{PROP:blowup}]
The proof is based on the test-function method by
Zhang \cite{Z}.
While we closely follows the arguments in \cite{IW, O},
some care must be taken due to the almost periodic nature of the problem.
In particular, while there is only one parameter for the space-time cutoff functions in  \cite{IW, O},
we need to introduce a space-time cutoff function depending on two parameters.
This is mainly due to the fact that the $L^p$-norm
of a non-zero almost periodic function
 is infinite for any finite value of $p$
 and thus   we need to use the mean value $M(|u|^p)$ of $|u|^p$ instead.
For simplicity of presentation, we only prove Proposition \ref{PROP:blowup}
when $\Re \ld \cdot  \Im M(f) < 0$.
Without loss of generality, assume
$\text{Re}\, \ld > 0$ and
 $\Im M(f)  < 0$.

Let $\theta: [0, \infty)\to  [0, 1]$
be a smooth  cutoff function
 supported on  $[0,1)$
 such that
$\theta \equiv 1$ on $[0, \frac{1}{2})$
and $|\theta'(t)|, |\theta''(t)| \leq C$
for all $t \in [0, \infty)$.
Moreover, we impose that
\begin{align}
\frac{|\theta' (t)|^2}{\theta(t)} \leq C
\label{P0}
\end{align}

\noi
for all $t \in [0, 1)$.
Fix  $\eps \in (0, 1)$.
Given $L > 1$,
we define a smooth cutoff function
 $\eta_L:\R\to [0, 1]$
by
\[ \eta_L(x)
= \begin{cases}
1 & \text{for } |x|\leq  L-L^{1-\eps}, \\
\displaystyle \theta\bigg(\frac{|x| -  L+L^{1-\eps}}{L^{1-\eps}}\bigg)
& \text{for } |x|>  L-L^{1-\eps}.
\end{cases}
\]

\noi
In particular,  $\supp \eta_L \subset [-L, L]$
and by \eqref{P0}, we have
\begin{align}
|\dx^2 \eta_L(x)|, \  \frac{|\dx \eta_L (x)|^2}{\eta_L(x)} \leq \frac{C}{L^{2-2\eps}}
\label{P0b}
\end{align}

\noi
on  $(-L, L)$.
Finally, given $R, L > 1$, we define the space-time cutoff function $\phi_{R, L}$
by  $\phi_{R, L}(t, x) = \theta_R(t)\cdot \eta_L(x)$,
where   $\theta_R(t) := \theta(R^{-2}t)$.

For $R, L > 1$, define $\I_{R, L}$ by
\[\I_{R, L} : = \Re \ld \cdot \frac{1}{2L} \int_0^{R^2}  \int_{-L}^L |u |^p \phi_{R, L}^{p'} \, dx dt,
\]

\noi
where $p'$ is the H\"older conjugate exponent of $p$.
Since  $\Im M(f)  < 0$ and $AP(\R)\subset L^\infty(\R)$, we have
\begin{align*}
\lim_{L \to \infty}
& \Im\bigg[ \frac{1}{2L}\int_{-L}^L f(x)\eta_L^{p'}( x) dx\bigg] \\
& \leq
\lim_{L \to \infty}
\frac{L-L^{1-\eps}}{L}
\Im\bigg[ \frac{1}{2(L-L^{1-\eps})}\int_{-L+L^{1-\eps}}^{L-L^{1-\eps}} f(x) dx\bigg]
+ \lim_{L \to \infty}  \frac{C}{L^\eps} \|f\|_{L^\infty(\R)}\\
& =  \Im M(f)  < 0.
\end{align*}

\noi
Hence,
there exists $L_0 > 1$ such that we have
\begin{align}
\Im\bigg[ \frac{1}{2L}\int_{-L}^L f(x)\phi_{R, L}^{p'}(0,  x) dx\bigg] < 0
\label{P0a}
\end{align}
	
\noi
for all $L \geq L_0$.

Let $T > R^2$.
Then, from the weak formulation  \eqref{ZNLS2} with
\eqref{P0a}, \eqref{P0b},  and H\"older's inequality, we have
\begin{align}
\I_{R, L}
& =  \Im  \bigg[ \frac{1}{2L} \int_{-L}^L f(x) \phi_{R, L}^{p'}(0, x) dx\bigg]
+ \Re \bigg[\frac{1}{2L} \int_0^{R^2}  \int_{-L}^L u
\Big( -i \dt (\phi_{R, L}^{p'}) + \dx^2 (\phi_{R, L}^{p'}) \Big) dx dt\bigg] \notag \\
& <
\frac{1}{2L} \int_0^{R^2}  \int_{-L}^L \big|u\cdot \dt (\phi_{R, L}^{p'})\big|  dx dt
+\frac{1}{2L} \int_0^{R^2}  \int_{-L}^L \big|u\cdot\dx^2 (\phi_{R, L}^{p'})\big|  dx dt\notag \\
& \les
\frac{1}{R^2}\cdot   \frac{1}{2L}   \int_{\frac{R^2}{2}}^{R^2}  \int_{-L}^L |u (t, x) |
\eta_L (x)\phi_{R, L}^{p'-1} (t, x) \Big|\dt \theta \Big(\frac{t}{R^2}\Big)\Big|
 dx dt \notag \\
& \hphantom{XXXXXX} +
     \frac{1}{2L}   \int_{\frac{R^2}{2}}^{R^2}  \int_{-L}^L |u (t, x) |
\theta_R(t) \phi_L^{p'-1} (t, x)
 \bigg(
 \frac{|\dx \eta_L(x)|^2 }{\eta_L( x)}+ |\dx^2 \eta_L( x)|\bigg)  dx dt \notag \\
& \lesssim
\bigg(\frac{1}{R^2}+ \frac{1}{L^{2-2\eps}}\bigg) \bigg(\frac{1}{2L}
\int_{\frac{R^2}{2}}^{R^2} \int_{-L}^L 1 \, dx dt\bigg)^\frac{1}{p'}
\bigg(  \frac{1}{2L}  \int_{\frac{R^2}{2}}^{R^2}  \int_{-L}^L |u (x, t) |^p   \phi_{R, L}^{p'}(x, t)
 dx dt\bigg)^\frac{1}{p} \notag \\
& \lesssim \big(R^{-\frac{2}{p}} + R^\frac{2}{p'} L^{-2+ 2\eps}) \I_{R,  L}^\frac{1}{p}
\label{P0e}
\end{align}

\noi
for all $L \geq L_0$.
Hence,  noting that $p > 1$ and $\eps \in (0, 1)$, we have
\begin{equation*}
\I_{R, L} \lesssim R^{-\frac{2}{p-1}} + R^2 L^{-\frac{2p}{p-1}(1-\eps)} \leq C < \infty,
\end{equation*}

\noi
as long as
\begin{align}
L \gg  \max \big(R^{\frac{ p-1}{(1-\eps)p}},  L_0\big)
\quad \text{and} \quad R> 1.
\label{P0c}
\end{align}

\noi
Since $\phi_{R, L}(t, x)\equiv 1$ on $[0, \frac{R^2}{2})\times [-\frac{L}{2}, \frac{L}{2}]$
for $L\gg1$,
we have
\begin{equation}
  \int_0^{\frac{R^2}{2}}  \frac{1}{2L} \int_{-\frac{L}{2}}^{\frac{L}{2}} |u|^p   dx dt
  \lesssim R^{-\frac{2}{p-1}} + R^2 L^{-\frac{2p}{p-1}(1-\eps)} \leq C < \infty,
\label{P0d}
\end{equation}

\noi
independent of $L, R\gg 1$, satisfying \eqref{P0c}.
Since  $u \in L^\infty([0, T); \A_{\oo}(\R))$, we have
\begin{equation*}
\frac{1}{2L} \int_{-\frac L 2}^{\frac L2} |u(t) |^{p} dx \leq \frac 12 \|u\|^p_{L^\infty([0, T); \A_{\oo}(\R))}
\end{equation*}

\noi
for all $L > 0$ and $t \in [0, T)$.
Then, by Dominated Convergence Theorem, we have
\begin{align*}
\lim_{L\to \infty}
\int_0^{\frac{R^2}{2}}  \frac{1}{2L} \int_{-\frac{L}{2}}^{ \frac{L}{2}} |u(t)|^p   dx dt
= \int_0^{\frac{R^2}{2}}  \lim_{L\to \infty}
\frac{1}{2L} \int_{-\frac{L}{2}}^{ \frac{L}{2}} |u(t)|^p   dx dt
= \frac{1}{2} \int_0^{\frac{R^2}{2}}
M\big(|u(t)|^p\big) dt
\end{align*}

\noi
for every fixed $R>1$.
Hence, by Monotone Convergence Theorem with \eqref{P0d}, we obtain
\begin{align*}
 \int_0^\infty
M\big(|u(t)|^p\big) dt
& =
\lim_{R\to \infty} \int_0^{\frac{R^2}{2}}
M\big(|u(t)|^p\big) dt\\
& =
\lim_{R\to \infty}\lim_{L\to \infty}
\int_0^{\frac{R^2}{2}}  \frac{1}{L} \int_{-\frac{L}{2}}^{ \frac{L}{2}} |u(t)|^p   dx dt
= 0.
\end{align*}

\noi
Therefore, by Lemma \ref{LEM:AP1},
we conclude that $u(t) = 0$ for almost every $t$.
\end{proof}

\begin{remark}\label{REM:bound2}\rm
Suppose $\Re \ld > 0$ and $\Im M(f) > 0$.
Let $u$ be a global weak solution to \eqref{ZNLS1} on $[0, \infty)$
with $u|_{t = 0} = f$.
Then, by repeating the computation in \eqref{P0e}
and taking the limits of both sides as $L\to \infty$
with Dominated Convergence Theorem as above, we
\begin{align*}
\I_{R}
\leq \Im M(f) + C R^{-\frac{2}{p}} \I_{R}^\frac{1}{p}.
\end{align*}

\noi
where
\[\I_{R} : = \Re \ld  \int_0^{R^2}  M( |u (t) |^p) \theta_{R}^{p'}(t) \, dt.
\]

\noi
Then, by the continuity argument and Fatou's lemma, we obtain
\begin{align} \int_0^\infty
M\big(|u(t)|^p\big) dt <\infty.
\label{decay1}
\end{align}

\noi
Hence, it follows from   Lemma \ref{LEM:AP1} that
 any global solution on $[0, \infty)$ must
go to 0 as $t\to \infty$ in some averaged sense.
In view of Proposition \ref{PROP1},
the same conclusion holds for a global solution $u \in C([0, \infty); \A_{\oo}(\R))$
satisfying the Duhamel formulation \eqref{ZNLS3}.

\end{remark}

Finally,  we present the proof of Proposition \ref{PROP1}.
\begin{proof}[Proof of Proposition \ref{PROP1}]
Let $u$ be  a  solution in $ C([0, T);\A_{\oo}(\R))$,
satisfying the Duhamel formulation \eqref{ZNLS3}.
Write $u(t) = S(t) f +\D(u)(t)$,
where $\mathcal{D}(u)$ is given by
\[\mathcal{D}(u)(t) = -i   \int_0^t S(t-t') \ld |u(t')|^p dt'.\]

\noi
In the following, $\phi$ denotes a
 test function in $ C^\infty_c((-\infty, T)\times \R)$.
We use $W^{s, p}_T$ and $L^p_T$
to denote
$W^{s, p}_t([0, T])$ and $L^p_t([0, T])$, respectively.

First, we show that the linear part $S(t) f$ satisfies
\begin{align}\label{L1}
\int_0^T \int_{\R} S(t)f \Big(-i \dt \phi + \dx^2 \phi\Big) dx dt
= i \int_{\R} f(x) \phi( 0, x) dx.
\end{align}

\noi
Given $f\in \A_{\oo}(\R)$,
define $f_N$ by \eqref{AP6}.
Then, the corresponding linear solution $S(t)f_N$
is given by \eqref{lin1}.
By integration by parts, we have
\begin{align}
\int_0^T \int_{\R}  S(t) f_{N}\Big( & -i \dt   \phi + \dx^2 \phi\Big) dx dt \notag\\
&  =
\sum_{\n \in B_N} \ft f(\oo\cdot \n)
\int_0^T \int_{\R}
e^{-i (\oo\cdot \n)^2 t} e^{i (\oo \cdot \n)x}
\Big(-i \dt \phi + \dx^2 \phi\Big) dx dt \notag \\
& = i 
\sum_{\n \in B_N} \ft f(\oo\cdot \n)
 \int_{\R}
 e^{i (\oo \cdot \n)x}
\phi(0, x) dx
= i \int_{\R} f_N(x) \phi( 0, x) dx. \label{L2}
\end{align}

\noi
Recall that $f_N$ converges to $f$ in $\A_{\oo}(\R)$ and in $L^\infty(\R)$.
Then, by H\"older's inequality and Lemma \ref{LEM:linear1} (ii), we have
\begin{align}
\bigg|\int_0^T \int_{\R} \big(S(t) f &  - S(t) f_N\big)\Big(-i \dt \phi + \dx^2 \phi\Big)\, dx dt\bigg| \notag\\
&  \leq \| S(t)(f -f_N)\|_{L^\infty_{t, x }}
\big( \|\phi\|_{W^{1, 1}_T L^1_x} + \|\phi\|_{L^1_T W^{2, 1}_x}\big)\notag\\
& \leq \| f -f_N\|_{\A_{\oo}(\R)}
\big( \|\phi\|_{W^{1, 1}_T L^1_x} + \|\phi\|_{L^1_T W^{2, 1}_x}\big)
\too 0,  \label{L3}
\end{align}

\noi
as $N \to \infty$.
Similarly, we have
\begin{align}
\bigg| \int_{\R} \big(f (x)- f_N(x)\big) \phi( 0, x) dx\bigg|
 \leq \| f -f_N\|_{L^\infty}\|\phi(0, x)\|_{L^1_x} \too 0,
\label{L4}
\end{align}

\noi
as $N \to \infty$.
Hence, \eqref{L1} follows from \eqref{L2}, \eqref{L3}, and \eqref{L4}.

Since $u \in C([0, T); \A_{\oo}(\R))$,
we have $\ld |u|^p \in C([0, T); \A_{\oo}(\R))$.
Given an enumeration $\{ r_j\}_{j = 1}^\infty$
of  $\Z^{\mathbb N}$,
define $\D_N$ by
\begin{align*}
\D_N(t, x)
= -i  \ld \sum_{\n \in B_N} \int_0^t \ft{|u|^p}(t', \oo\cdot \n)e^{-i (\oo\cdot \n)^2 (t-t')} dt' e^{i (\oo \cdot \n)x}
\end{align*}

\noi
for $t \in [0, T]$, where $B_N = \{r_j \}_{j = 1}^N$,  $N \in \mathbb N$.
Since $\D_N$ is a finite linear combination of smooth functions,
noting that $\D_N(0, x) = \phi(T, x) = \dx \phi(T, x) = 0$ for all $x \in \R$,
 integration by parts  yields
\begin{align*}
 \int_0^T \int_{\R} \D_N \Big(-i \dt \phi + \dx^2 \phi\Big)\, dx dt
= \ld \int_0^T \int_{\R} U_N \phi \, dx dt,
\end{align*}

\noi
where $U_N$ is defined by
\begin{align*}
U_N(t, x) =  \sum_{\n \in B_N}
\ft {|u|^p}(t, \oo \cdot \n)
 e^{i (\oo \cdot \n)x}.
 \end{align*}

\noi
Then, by proceeding as in  the proof of Lemma \ref{LEM:linear2} (i),
we obtain
\begin{align}
 \int_0^T \int_{\R} \D(u) \Big(-i \dt \phi + \dx^2 \phi\Big)\, dx dt
= \ld \int_0^T \int_{\R} |u|^p \phi \, dx dt.
\label{N4}
\end{align}

\noi
The identity \eqref{ZNLS2} follows from \eqref{L1} and  \eqref{N4}.
 \end{proof}

\end{document}